\begin{document}

\theoremstyle{definition}
\newtheorem{theorem}{Theorem}[section]
\newtheorem*{theorem*}{Theorem}
\newtheorem{definition}[theorem]{Definition}
\newtheorem*{definition*}{Definition}
\newtheorem{proposition}[theorem]{Proposition}
\newtheorem*{proposition*}{Proposition}
\newtheorem{lemma}[theorem]{Lemma}
\newtheorem*{lemma*}{Lemma}
\newtheorem{claim}[theorem]{Claim}
\newtheorem*{claim*}{Claim}
\newtheorem{corollary}[theorem]{Corollary}
\newtheorem*{corollary*}{Corollary}
\newtheorem{remark}[theorem]{Remark}
\newtheorem*{remark*}{Remark}
\newtheorem{example}[theorem]{Example}
\newtheorem*{example*}{Example}
\newtheorem{refproof}{Proof}

  \title{The $G$-index, $G$-coindex and $G$-weight of a real moment-angle complex}
  \author{Akatsuki Kizu}
  \address{Department of Mathematics, Kyoto University, Kyoto, 606-8502, Japan}
  \email{kizu.akatsuki.85e@st.kyoto-u.ac.jp}
  \date{}
  \maketitle

\begin{abstract}
A real moment-angle complex is a CW complex defined by the combinatoric information of a simplicial complex, and has a standard action of $2$-torus. For each subtorus $G$ of $2$-torus, we describe invariants corcerning this action, the $G$-index, $G$-coindex and $G$-weight, combinatorially. 
\end{abstract}


\section{Introduction and the main theorems}

\quad Let $K$ be a simplicial complex on the vertex set $[m]=\{1, \dots, m\}$. A subset $I\subset[m]$ is called a \textit{face} of $K$ if $I \in K$. Otherwise, it is called a \textit{non-face} of K. We say that a subset $I\subset[m]$ is a \textit{minimal non-face} of $K$ if it is a non-face of $K$ and every proper subset of $I$ is a face of $K$. The \textit{dimension} of $I$ is defined as 1 less than its order. If one point subset $\{i\} \subset [m]$ is a non-face of $K$, it is called a \textit{ghost vertex} of $K$. In this paper, we assume that every simplicial complex does not have ghost vertices. 

A \textit{real moment-angle complex} associated with $K$  is defined as a subcomplex of $(D^1)^m$
$$
\mathbb{R}Z_K:=\bigcup_{\sigma \in K}(D^1, S^0)^{\sigma}, 
$$ 
where $(D^1, S^0)^{\sigma}:=\{(x_1, \dots, x_m) \in (D^1)^m \mid \text{$x_i \in S^0$ if $i \notin \sigma$}\}$. As seen in the paper of Davis and Januszukiewicz \cite{DJ}, $\mathbb{R}Z_K$ is one of the key objects and has been deeply studied in toric topology. Since $\mathbb{R}Z_K$ is constructed by using the combinatoric information of $K$, it is natural to expect that homotopy invariants of $\mathbb{R}Z_K$ could be described in terms of the combinatorics of $K$. For example, the equivariant cohomology is studied in the paper of Davis and Januszukiewicz \cite{DJ}. The cohomology ring is studied in \cite{cai} in the case that $\mathbb{R}Z_K$ is a topological manifold. The homotopy type is studied in \cite{grbic} when $K$ is \textit{flag}, that is, every minimal non-face of $K$ consists of two vertices. In \cite{iriye}, it is shown that $\mathbb{R}Z_K$ plays an important role to understand the homotopy type of a  \textit{polyhedral product}, a generalized object of a real moment-angle complex,  by using a structure which is called \textit{fat-wedge filtration}. 

By construction, there is a standard action of $2$-torus $(\mathbb{Z}/2)^m$ on $\mathbb{R}Z_K$. Specifically, for $g=(g_1, \dots, g_m) \in (\mathbb{Z}/2)^m$ and $x=(x_1, \dots, x_m) \in \mathbb{R}Z_K$, we set 
$$
g \cdot x:=\left( g_1x_1, \dots, g_mx_m \right), 
$$
where we regard $\mathbb{Z}/2=\{1, -1\}$ and $D^1=[-1, 1]$. Since $K$ has no ghost vertex, it can be easily seen that this action is always not free. However, some subtori of $(\mathbb{Z}/2)^m$ may act on $\mathbb{R}Z_K$ freely. The quotient space of $\mathbb{R}Z_K$ by the free action of a subtorus of $(\mathbb{Z}/2)^m$, called \textit{partial quotient} in \cite{buchstaber}, has been studied in the many context \cite{franz, fu, hasui1}. In particular, small cover which is introduced in \cite{DJ} is a classical example of partial quotient and has been deeply studied \cite{choi, lu}. 

Let $G$ be a non-trivial discrete group and $X$ be a free $G$-space. We define the topological invariant \textit{$G$-index} as
$$
\mathrm{ind}_{G}(X):=\min \{n \geq 0 \mid \text{there is a $G$-map $X \to E_nG$} \}, 
$$
where $E_nG$ is the $(n+1)$-fold join of $G$ with a diagonal $G$-action. We also consider the similar notion \textit{$G$-coindex}. Let $X$ be a $G$-space. Define 
$$
\mathrm{coind}_{G}(X):=\min \{n \geq 0 \mid \text{there is a $G$-map $E_nG \to X$} \}. 
$$
These invariants were introduced in the paper of Conner and Floyd \cite{conner}. Note that they call $G$-index what we call $G$-coindex in \cite{conner}, and vice versa. As will be seen later, there is an inequality $\mathrm{coind}_{G}(X) \leq \mathrm{ind}_{G}(X)$, which is a generalization of the Borsuk-Ulam theorem. The $G$-index has a close relationship with Lusternik-Shnirelmann category; see \cite[Section 5.3]{matousek}. The $G$-index and $G$-coindex are so hard to compute in general. We also introduce the cohomological invariant \textit{$G$-weight}. Recall that there is a fibration $EG \times_{G} X \to BG$ for a $G$-space $X$. Define
$$
\mathrm{wgt}_{G}(X):=\max \{n \geq 0 \mid \text{$H^k(BG) \to H^k(EG \times_{G} X)$ is injective for all $ k\leq n$ }\}. 
$$
This invariant is essentially the same as the ideal-valued index in the paper of Fadell and Husseini \cite{fadell}. As will be seen later, there are inequalities $\mathrm{coind}_{G}(X) \leq \mathrm{wgt}_{G}(X) \leq \mathrm{ind}_{G}(X)$ when $G$ is a $p$-torus. Thus the $G$-weight is helpful to evaluate the $G$-index and $G$-coindex. However, the $G$-weight is still hard to compute, not as hard as the $G$-index and $G$-coindex. We can see that the $G$-weight is used in \cite{hasui2, volovikov1, volovikov2}. We will see some properties of these invariants in Section 2.  

Our goal in this paper is to describe the $G$-index, $G$-coindex and $G$-weight of $\mathbb{R}Z_K$ in terms of the combinatorics of $K$ for a given subgroup $G < (\mathbb{Z}/2)^m$. Let us introduce the combinatorial notion \textit{$\delta$-number $\delta(K)$} of a simplicial complex $K$ defined as the minimum number of the dimension of non-faces of $K$, that is, 
$$ 
\delta(K):= \min \left\{|I|-1\mid \text{$I$ is a non-face of $K$} \right\}. 
$$
If $K$ is a full simplex, then we set $\delta(K)=\infty$. 

We now can state the main theorems. For a non-empty subset $I \subset [m]$, we define the \textit{full subcomplex} of $K$ over $I$ as 
$$
K_I :=\{\sigma \subset I \mid \sigma \in K\}. 
$$
For each $g=(g_1, \dots, g_m) \in G$, let us set the \textit{support} of $g$ as the subset of $[m]$ 
$$
\mathrm{supp}(g):=\{i \in [m] ~|~ g_i=-1 \}, 
$$
and set $\mathrm{supp}(G):= \bigcup_{g \in G}\mathrm{supp}(g)$.  

\begin{theorem} \label{main1}
\textit{
Let $G < (\mathbb{Z}/2)^m$ be a subgroup of rank one. 
\begin{enumerate}[(i)]
	\item If the action of $G$ on $\mathbb{R}Z_K$ is free, then there is an equality
$$
\mathrm{ind}_{G}(\mathbb{R}Z_K)=\delta(K_{\mathrm{supp}(G)}). 
$$
	\item There are equalities
$$
\mathrm{coind}_{G}(\mathbb{R}Z_K)=\mathrm{wgt}_{G}(\mathbb{R}Z_K)=\delta(K_{\mathrm{supp}(G)}), 
$$
where the action of $G$ on $\mathbb{R}Z_K$ is not necessarily free. 
\end{enumerate}
}
\end{theorem}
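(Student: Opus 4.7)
The plan is to reduce to the case $\mathrm{supp}(G)=[m]$ and then prove matching bounds for each invariant, combining them via the general chain $\mathrm{coind}_{G}\leq\mathrm{wgt}_{G}\leq\mathrm{ind}_{G}$. For the reduction, I would observe that $\mathbb{R}Z_{K_{\mathrm{supp}(G)}}$ is a $G$-equivariant retract of $\mathbb{R}Z_K$: the coordinate projection $\pi\colon\mathbb{R}Z_K\to\mathbb{R}Z_{K_{\mathrm{supp}(G)}}$ is $G$-equivariant, and extending a point by $1$ in the coordinates outside $\mathrm{supp}(G)$ gives a $G$-equivariant section (since $G$ acts trivially on those coordinates). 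Since each of $\mathrm{ind}_{G}$, $\mathrm{coind}_{G}$, $\mathrm{wgt}_{G}$ is preserved by equivariant retraction, I may assume $\mathrm{supp}(G)=[m]$, in which case $G=\mathbb{Z}/2$ acts on $\mathbb{R}Z_K$ by $x\mapsto -x$. Set $\delta=\delta(K)$ and, when $\delta<\infty$, pick a minimal non-face $I\subset[m]$ of cardinality $\delta+1$; then $K_I=\partial\Delta^I$, so $\mathbb{R}Z_{K_I}\cong S^{\delta}$.

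For the weight upper bound $\mathrm{wgt}_{G}(\mathbb{R}Z_K)\leq\delta$, chase $v_I=\prod_{i\in I}v_i$ around the naturality square
\[
\begin{array}{ccc}
H^*(BT;\mathbb{F}_2) & \longrightarrow & H^*(BG;\mathbb{F}_2) \\
\downarrow & & \downarrow \\
H^*_{T}(\mathbb{R}Z_K;\mathbb{F}_2) & \longrightarrow & H^*_{G}(\mathbb{R}Z_K;\mathbb{F}_2)
\end{array}
\]
with $T=(\mathbb{Z}/2)^m$: by the Davis--Januszkiewicz identification $H^*_{T}(\mathbb{R}Z_K)=\mathbb{F}_2[v_1,\dots,v_m]/I_K$, and since the top horizontal map sends each $v_i$ to $t$, the class $v_I$ vanishes in $H^*_{T}(\mathbb{R}Z_K)$ (as $v_I\in I_K$) and hence in $H^*_{G}(\mathbb{R}Z_K)$, whereas along the top route $v_I\mapsto t^{\delta+1}$. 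For the index upper bound $\mathrm{ind}_{G}(\mathbb{R}Z_K)\leq\delta$, the coordinate projection $(x_j)_{j\in[m]}\mapsto(x_i)_{i\in I}$ sends $\mathbb{R}Z_K$ into $\mathbb{R}Z_{K_I}\cong S^{\delta}$, because for any $x\in\mathbb{R}Z_K$ the set $\{i\in I: x_i\notin\{-1,1\}\}$ is the intersection with $I$ of a face of $K$, necessarily a proper subset of $I$; this projection is manifestly $G$-equivariant for the antipodal actions.

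For the coindex lower bound $\mathrm{coind}_{G}(\mathbb{R}Z_K)\geq\delta$ in the free case $\delta<\infty$, the key geometric input is that $\mathbb{R}Z_K$ is $(\delta-1)$-connected. Since every subset of $[m]$ of cardinality at most $\delta$ is a face of $K$, $\mathbb{R}Z_K$ contains the $\delta$-skeleton of the cube $[-1,1]^m$, which is $(\delta-1)$-connected by cellular approximation (the cube being contractible); the remaining cells of $\mathbb{R}Z_K$ all have dimension at least $\delta+1$, so their attachment preserves $(\delta-1)$-connectivity. Using the standard equivariant CW structure on $S^{\delta}$ with one upper-hemisphere cell in each dimension, one then builds a $G$-equivariant map $S^{\delta}\to\mathbb{R}Z_K$ by induction, the $k$-th obstruction lying in $\pi_{k-1}(\mathbb{R}Z_K)=0$ for $k\leq\delta$.

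Combining these three bounds with $\mathrm{coind}_{G}\leq\mathrm{wgt}_{G}\leq\mathrm{ind}_{G}$ collapses all three invariants to $\delta$, yielding (i) and the free subcase of (ii). The non-free subcase is automatic: $\delta(K_{\mathrm{supp}(G)})=\infty$ forces $K_{\mathrm{supp}(G)}$ to be a full simplex, so $\mathbb{R}Z_{K_{\mathrm{supp}(G)}}$ is a contractible cube with the origin as a $G$-fixed point, and both $\mathrm{coind}_{G}$ and $\mathrm{wgt}_{G}$ are then infinite. The most delicate step will be the $(\delta-1)$-connectivity of $\mathbb{R}Z_K$, where cellular approximation for the cube skeleton must be combined with bookkeeping of the attached higher cells; once that is in place, the cohomology chase and the equivariant obstruction-theoretic construction of $S^{\delta}\to\mathbb{R}Z_K$ are both routine.
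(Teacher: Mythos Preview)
Your proposal is correct and follows essentially the same route as the paper: reduce to the diagonal action via the $G$-equivariant inclusion and projection between $\mathbb{R}Z_K$ and $\mathbb{R}Z_{K_{\mathrm{supp}(G)}}$, use $(\delta-1)$-connectivity of $\mathbb{R}Z_K$ (obtained from the skeletal identification with the cube) for the coindex lower bound, and project onto $\mathbb{R}Z_{K_I}\cong S^{\delta}$ for a minimal non-face $I$ to get the index upper bound, so that the chain $\mathrm{coind}_G\leq\mathrm{wgt}_G\leq\mathrm{ind}_G$ collapses everything to $\delta$. Your Davis--Januszkiewicz chase of $v_I\in I_K$ for the bound $\mathrm{wgt}_G\leq\delta$ is a correct and pleasant alternative, but it is redundant here since the projection already yields $\mathrm{ind}_G\leq\delta$ and hence $\mathrm{wgt}_G\leq\delta$.
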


\begin{theorem} \label{main2}
\textit{
	Let $G < (\mathbb{Z}/2)^m$ be a non-trivial subgroup. 
\begin{enumerate}[(i)]
	\item If the action of $G$ on $\mathbb{R}Z_K$ is free, then there are inequalities
$$
\max_{g \in G \setminus \{1\}} \delta(K_{\mathrm{supp}(g)}) \leq \mathrm{ind}_{G}(\mathbb{R}Z_K) \leq \dim K+1. 
$$
	\item There are inequalities
$$ 
\delta(K_{\mathrm{supp}(G)}) \leq \mathrm{coind}_{G}(\mathbb{R}Z_K) \leq \mathrm{wgt}_{G}(\mathbb{R}Z_K) \leq \min_{g \in G \setminus \{1\}} \delta(K_{\mathrm{supp}(g)}), 
$$
where the action of $G$ on $\mathbb{R}Z_K$ is not necessarily free. 
\end{enumerate}
}
\end{theorem}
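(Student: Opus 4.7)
My plan is to reduce Theorem \ref{main2} to Theorem \ref{main1} applied to rank-one subgroups of $G$, combined with a single retraction reduction and general monotonicity of the three invariants. The main preliminary observation is that, with $I := \mathrm{supp}(G)$, the inclusion $\mathbb{R}Z_{K_I} \hookrightarrow \mathbb{R}Z_K$ obtained by filling each missing coordinate with $1 \in D^1$ is $G$-equivariant, because $G$ fixes every coordinate outside $I$, and has the coordinate projection $\mathbb{R}Z_K \to \mathbb{R}Z_{K_I}$ as a $G$-equivariant retraction. Hence $\mathrm{ind}_G$, $\mathrm{coind}_G$ and $\mathrm{wgt}_G$ of $\mathbb{R}Z_K$ agree with those of $\mathbb{R}Z_{K_I}$, so I may assume $\mathrm{supp}(G) = [m]$ whenever useful.

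For Part (i), the upper bound $\mathrm{ind}_G(\mathbb{R}Z_K) \leq \dim K + 1$ follows from the standard fact that a free $G$-CW complex of dimension $n$ admits a $G$-map to $E_n G$, built skeleton by skeleton from the $(n-1)$-connectedness of $E_n G$, applied with $n = \dim \mathbb{R}Z_K = \dim K + 1$. For the lower bound, fix $g \in G \setminus \{1\}$; then $\langle g \rangle$ still acts freely and Theorem \ref{main1}(i) gives $\mathrm{ind}_{\langle g \rangle}(\mathbb{R}Z_K) = \delta(K_{\mathrm{supp}(g)})$. Any $G$-map $\mathbb{R}Z_K \to E_n G$ is in particular a $\langle g \rangle$-map into the free $\langle g \rangle$-CW complex $E_n G$ of dimension $n$, which itself maps $\langle g \rangle$-equivariantly to $E_n \langle g \rangle$ by the same dimension argument. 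Composing yields $\mathrm{ind}_{\langle g \rangle}(\mathbb{R}Z_K) \leq \mathrm{ind}_G(\mathbb{R}Z_K)$, and taking the maximum over $g$ gives the lower bound.

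For Part (ii), the middle inequality is the general $\mathrm{coind}_G \leq \mathrm{wgt}_G$ for $2$-tori mentioned in the introduction. For the rightmost inequality, fix $g \neq 1$, pick a minimal non-face $J$ of $K_{\mathrm{supp}(g)}$ of size $\delta(K_{\mathrm{supp}(g)}) + 1$, and use the commutative diagram induced by $G \hookrightarrow (\mathbb{Z}/2)^m$
$$
\begin{array}{ccc}
H^*(B(\mathbb{Z}/2)^m) & \longrightarrow & H^*(E(\mathbb{Z}/2)^m \times_{(\mathbb{Z}/2)^m} \mathbb{R}Z_K) \\
\downarrow & & \downarrow \\
H^*(BG) & \longrightarrow & H^*(EG \times_G \mathbb{R}Z_K)
\end{array}
$$
whose top row is the Stanley-Reisner quotient $\mathbb{F}_2[t_1,\dots,t_m] \twoheadrightarrow \mathbb{F}_2[K]$. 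Writing $u_i \in H^1(BG)$ for the image of $t_i$, the monomial $\prod_{i \in J} u_i$ lies in the kernel of the bottom row, and is nonzero in the polynomial ring $H^*(BG)$ because each $u_i$ with $i \in J \subset \mathrm{supp}(g)$ is a nonzero linear form. This produces an element of $\mathrm{Ind}_G(\mathbb{R}Z_K)$ of degree $|J|$, forcing $\mathrm{wgt}_G(\mathbb{R}Z_K) \leq \delta(K_{\mathrm{supp}(g)})$; taking the minimum over $g$ finishes this step.

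For the leftmost inequality, after reducing to $\mathrm{supp}(G) = [m]$ it suffices to show $\delta(K) \leq \mathrm{coind}_G(\mathbb{R}Z_K)$. I plan to invoke the standard result that $\mathbb{R}Z_K$ is $(\delta(K)-1)$-connected. The $G$-orbit of $(1,\dots,1) \in \{\pm 1\}^m \subset \mathbb{R}Z_K$ is free, giving a $G$-map $E_0 G = G \to \mathbb{R}Z_K$ to start with, and equivariant obstruction theory then extends this cell by cell across $E_{\delta(K)} G$, the obstruction across $(k+1)$-cells lying in $\pi_k(\mathbb{R}Z_K) = 0$ for $k \leq \delta(K)-1$. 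The hard part will be this last step: it depends on the external $(\delta(K)-1)$-connectedness of $\mathbb{R}Z_K$ and on a careful equivariant obstruction argument for a possibly non-free $G$-action; the rightmost inequality also implicitly uses the description of the Borel cohomology of $\mathbb{R}Z_K$ via the Stanley-Reisner ring, which should both be established earlier in the paper.
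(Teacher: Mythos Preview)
Your proof is correct, and most of it matches the paper's argument: the reduction to $\mathrm{supp}(G)=[m]$ via the $G$-equivariant inclusion/projection, the dimension bound $\mathrm{ind}_G(\mathbb{R}Z_K)\le\dim K+1$, the lower bound on $\mathrm{ind}_G$ via restriction to $\langle g\rangle$ and Theorem~\ref{main1}, and the lower bound on $\mathrm{coind}_G$ via the $(\delta(K)-1)$-connectedness of $\mathbb{R}Z_K$ are all exactly what the paper does. Your worry about the non-free action in the last step is unnecessary: since $E_{\delta(K)}G$ is a \emph{free} $G$-CW complex, equivariant obstruction theory only sees one cell per $G$-orbit on the source, and only the connectivity of the target matters.

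The one genuine divergence is your proof of the rightmost inequality $\mathrm{wgt}_G(\mathbb{R}Z_K)\le\delta(K_{\mathrm{supp}(g)})$. The paper does not touch the Stanley--Reisner description of the Borel cohomology at all; instead it invokes the subgroup inequality $\mathrm{wgt}_G\le\mathrm{wgt}_{\langle g\rangle}$ (Proposition~\ref{subgroup}) and then reads off $\mathrm{wgt}_{\langle g\rangle}(\mathbb{R}Z_K)=\delta(K_{\mathrm{supp}(g)})$ from Theorem~\ref{main1}, whose proof in turn rests on the concrete $\mathbb{Z}/2$-map $\mathbb{R}Z_K\to\mathbb{R}Z_{\partial\Delta^q}=S^q$ obtained by projecting onto a minimal non-face. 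Your route---exhibiting a nonzero monomial $\prod_{i\in J}u_i\in H^{|J|}(BG;\mathbb{F}_2)$ in the kernel via the Davis--Januszkiewicz isomorphism $H^*_{(\mathbb{Z}/2)^m}(\mathbb{R}Z_K)\cong\mathbb{F}_2[K]$---is perfectly valid (each $u_i$ with $i\in\mathrm{supp}(g)$ is indeed nonzero since it evaluates nontrivially on $g$, and $H^*(BG;\mathbb{F}_2)$ is an integral domain), but it imports the DJ theorem as a black box, whereas the paper's argument stays entirely within the elementary equivariant machinery already set up in Section~2. In exchange, your argument is slightly more direct and shows where the kernel element actually lives.
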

These theorems say that the $G$-index, $G$-coindex and $G$-weight for a real moment-angle complex can be evaluated by using a $\delta$-number and especially be computed completely when the rank of $G$ is one. We will give proofs of the main theorems in Section 3.

In Section 4, we study the upper bound of the $G$-index. As shown in Section 2, if $X$ is a free $G$-complex, then its $G$-index is smaller than or equal to its dimension, that is, $\mathrm{ind}_G(X) \leq \dim X$. Thus if $G$ acts on $\mathbb{R}Z_K$ freely, then there is an inequality 
$$
\mathrm{ind}_G(\mathbb{R}Z_K) \leq \dim \mathbb{R}Z_K = \dim K+1. 
$$
We find some sufficient condition for making the upper bound smaller. A pair of simplices $(\sigma, \tau)$ of $K$ is said to be an \textit{elementary collapsing pair} if $\tau \subset \sigma$ and $\sigma$ is the only simplex including $\tau$ properly. When $(\sigma, \tau)$ is an elementary collapsing pair of $K$, removing $\sigma$ and $\tau$ from $K$ is called an \textit{elementary collapse}, denoted by $K \searrow_{(\sigma, \tau)} K^{\prime}$, where $K^{\prime}:=K \setminus \{\sigma, \tau\}$. A sequence of elementary collapses is simply said to be a \textit{collapse}. 

\begin{proposition}
\label{retract}
\textit{
Let $K$ be a simplicial complex and suppose that $G < (\mathbb{Z}/2)^m$ acts on $\mathbb{R}Z_K$ freely. If $K$ collapses to some simplicial complex $L$ such that $\dim L < \dim K$, then there is an inequality
$$
\mathrm{ind}_G(\mathbb{R}Z_K) \leq \dim K. 
$$    
}
\end{proposition}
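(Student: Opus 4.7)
The plan is to use the collapse $K \searrow L$ to build a $G$-equivariant strong deformation retract of $\mathbb{R}Z_K$ onto a $G$-CW complex $Y$ with $\dim Y \leq \dim K$, and then invoke the bound $\mathrm{ind}_G(Y) \leq \dim Y$ for free $G$-CW complexes recalled in Section 2.

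First, I would use the standard CW structure on $\mathbb{R}Z_K$ inherited from $(D^1)^m$: the open cells $e^\sigma_\epsilon$ are indexed by $\sigma \in K$ together with a sign pattern $\epsilon \colon [m] \setminus \sigma \to \{\pm 1\}$ and satisfy $\dim e^\sigma_\epsilon = |\sigma|$, while $G$ acts via $g \cdot e^\sigma_\epsilon = e^\sigma_{g \cdot \epsilon}$. The hypothesis that $G$ acts freely on $\mathbb{R}Z_K$ forces $\mathrm{supp}(g)$ to be a non-face of $K$ for every $g \neq 1$, so in particular $\mathrm{supp}(g) \not\subset \sigma$ for any $\sigma \in K$; therefore $G$ acts freely on the set $\{\pm 1\}^{[m] \setminus \sigma}$ of sign patterns over each fixed $\sigma$, allowing $G$-equivariant choices to be made one orbit at a time.

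Given a collapse sequence $K = K_0 \searrow K_1 \searrow \cdots \searrow K_r = L$ whose $j$th step removes the free-face pair $(\sigma_j, \tau_j)$, I would simulate each simplicial collapse by a block of $G$-equivariant elementary CW collapses on $\mathbb{R}Z_K$: for each $G$-orbit of sign patterns $\epsilon$ on $[m] \setminus \sigma_j$, pair $e^{\sigma_j}_\epsilon$ with its codimension-one face $e^{\tau_j}_{(\epsilon, s_j(\epsilon))}$, the sign $s_j(\epsilon) \in \{\pm 1\}$ on the unique coordinate of $\sigma_j \setminus \tau_j$ being a $G$-equivariant choice. Since $\dim L < \dim K$, every facet of $K$ occurs as some $\sigma_j$; hence after performing all the collapses every top-dimensional cell of $\mathbb{R}Z_K$ has been removed, and the resulting $G$-CW complex $Y$ satisfies $\dim Y \leq \dim K$.

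The main obstacle is ensuring that each CW pair really is a valid elementary collapse. At step $j$ the cell $e^{\tau_j}_{(\epsilon, s_j(\epsilon))}$ must remain a free face of $e^{\sigma_j}_\epsilon$ after all previous collapses; the cells $e^{\sigma_{j'}}_\cdot$ for $j' < j$ have already been deleted, but the un-paired halves of the $e^{\tau_{j'}}_\cdot$ may, when $\tau_{j'} \supsetneq \tau_j$, still be codimension-one cofaces of some $e^{\tau_j}_\cdot$. I would handle this by making the sign choices $s_j$ inductively and $G$-equivariantly so that the problematic halves are removed before step $j$; this is essentially a patchwork-style lifting of the simplicial acyclic matching underlying $K \searrow L$ to a $G$-equivariant acyclic matching on the cells of $\mathbb{R}Z_K$, from which $Y$ arises as the critical complex via Forman's theorem.
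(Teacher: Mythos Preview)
Your discrete-Morse strategy does ultimately work and is genuinely different from the paper's proof, but there is a real gap in the step where you claim that the signs $s_j$ can be arranged so that each $e^{\tau_j}_{(\epsilon,s_j(\epsilon))}$ is a free face at stage $j$. The surviving halves that obstruct you are not only the codimension-one ones: whenever $\tau_{j'}\supsetneq\sigma_j$ (equivalently $v_j\in\tau_{j'}$, given $\tau_{j'}\supsetneq\tau_j$; this forces $|\tau_{j'}|\ge|\tau_j|+2$), the $\tau_{j'}$-coface of $e^{\tau_j}_{(\epsilon,s_j(\epsilon))}$ has sign pattern $\epsilon|_{[m]\setminus\tau_{j'}}$, which is independent of $s_j(\epsilon)$ because $[m]\setminus\tau_{j'}\subset[m]\setminus\sigma_j$. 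As $\epsilon$ ranges over $\{\pm1\}^{[m]\setminus\sigma_j}$ this restriction is surjective, so for some $\epsilon$ the surviving half of $e^{\tau_{j'}}_{\cdot}$ is inevitably a coface and no choice of $s_j$ makes the face free. What \emph{is} true---and what rescues your argument---is that the matching is acyclic regardless of sign choices: any gradient cycle projects to a gradient cycle for the matching $\{(\sigma_j,\tau_j)\}$ on $K$, which is impossible since that matching comes from a collapse. Once you prove this, equivariant discrete Morse theory (or ordinary Forman applied to the free quotient $\mathbb{R}Z_K/G$, together with the factorization of the classifying map through $B_{\dim K}G$) produces a $G$-CW model whose cells are the critical ones, all of dimension $\le\dim K$, and the bound follows. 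You should replace the free-face discussion by this acyclicity check.

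The paper avoids discrete Morse theory entirely. For a single elementary collapse $(\sigma,\tau)$ with $|\sigma|=q$ it symmetrizes the standard retraction $I^q\to I^{q-1}\times\{0\}\cup\partial I^{q-1}\times I$ across all coordinate signs to obtain a $(\mathbb{Z}/2)^m$-equivariant retraction of $(D^1,S^0)^\sigma$ onto the usual boundary pieces \emph{together with} the zero-hyperplanes $\{x_i=0\}\subset(D^1)^\sigma$; these extra $(q{-}1)$-cells absorb the reflection symmetry and make the retraction equivariant without any sign choices at all. Iterating along the collapse sequence yields an explicit $(\mathbb{Z}/2)^m$-invariant subcomplex $X\subset\mathbb{R}Z_K$ with $\dim X\le\dim K$ onto which $\mathbb{R}Z_K$ equivariantly deformation-retracts. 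So the paper's argument is more elementary and produces a concrete subcomplex, while your (corrected) approach is more combinatorial and yields only an abstract $G$-CW model via Forman's theorem.
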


Proposition \ref{retract} means that we can make the upper bound smaller by one under the condition above. We will see that this modification of the upper bound is best possible by giving some example. 

In Section 5, we also study the condition that the $G$-coindex and $G$-weight of a real moment-angle complex equals to the lower bound. 

\begin{corollary} 
\label{cor1}
\textit{
	Let $G < (\mathbb{Z}/2)^m$ be a non-trivial subgroup. If $\delta(K_{\mathrm{supp}(G)})=1$, then there are equalities
$$
\mathrm{coind}_{G}(\mathbb{R}Z_K)=\mathrm{wgt}_{G}(\mathbb{R}Z_K)=1.
$$ 
}
\end{corollary}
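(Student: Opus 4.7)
The plan is to deduce the corollary directly from Theorem \ref{main2}(ii). That theorem supplies the inequality chain
$$\delta(K_{\mathrm{supp}(G)}) \leq \mathrm{coind}_G(\mathbb{R}Z_K) \leq \mathrm{wgt}_G(\mathbb{R}Z_K) \leq \min_{g \in G \setminus \{1\}} \delta(K_{\mathrm{supp}(g)}).$$
By hypothesis, the leftmost term equals $1$, so the only task left is to show that the rightmost term is at most $1$. I will also use that $K$ has no ghost vertices, so $\{i\} \in K$ for every $i$ and consequently $\delta(K_I) \geq 1$ for every non-empty $I \subset [m]$; this guarantees the whole chain collapses to $1$ as soon as the upper bound is established.

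To produce an element $g \in G \setminus \{1\}$ with $\delta(K_{\mathrm{supp}(g)}) \leq 1$, I would start from the hypothesis $\delta(K_{\mathrm{supp}(G)}) = 1$, which gives distinct indices $i, j \in \mathrm{supp}(G)$ with $\{i, j\} \notin K$. If such a $g$ satisfies $\{i, j\} \subset \mathrm{supp}(g)$, then $\{i, j\}$ is automatically a non-face of $K_{\mathrm{supp}(g)}$, forcing $\delta(K_{\mathrm{supp}(g)}) \leq 1$; such a $g$ is also automatically non-trivial. So the only real step is to produce this $g$.

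The main (and essentially only non-trivial) step is this group-theoretic observation, which I would phrase as follows. Let $\pi \colon (\mathbb{Z}/2)^m \to (\mathbb{Z}/2)^2$ be the projection onto the $i$-th and $j$-th coordinates. Then $\pi(G)$ is a subgroup of $(\mathbb{Z}/2)^2$, and the condition $i, j \in \mathrm{supp}(G)$ exactly says that $\pi(G)$ contains some element whose first coordinate is $-1$ and some element whose second coordinate is $-1$. Among the five subgroups of $(\mathbb{Z}/2)^2$, only $\{(1,1),(-1,-1)\}$ and the whole group meet both conditions, and each contains $(-1,-1)$. Hence there exists $g \in G$ with $g_i = g_j = -1$, which is the desired element. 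I do not anticipate a serious obstacle: Theorem \ref{main2} does the real work, and what remains is this one-line enumeration of subgroups of $(\mathbb{Z}/2)^2$.
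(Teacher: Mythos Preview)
Your proposal is correct and follows essentially the same route as the paper: both use Theorem~\ref{main2}(ii) to reduce to finding some $g_0\in G$ with $\{i,j\}\subset\mathrm{supp}(g_0)$, and both conclude by a short group-theoretic argument. The only cosmetic difference is that the paper phrases that last step as a case split together with the elementary observation that if $i\in\mathrm{supp}(g)$, $j\in\mathrm{supp}(h)$, $j\notin\mathrm{supp}(g)$, $i\notin\mathrm{supp}(h)$ then $i,j\in\mathrm{supp}(gh)$, whereas you phrase it as an enumeration of subgroups of $(\mathbb{Z}/2)^2$ via the projection $\pi$; these are the same argument in different clothing.
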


\begin{corollary} 
\label{cor2}
\textit{
	Let $G < (\mathbb{Z}/2)^m$ be a non-trivial subgroup. If each minimal non-face of $K_{\mathrm{supp}(G)}$ has the same order and $\mathrm{supp}(g_0) \notin K$ for some $g_0 \in G$, then there are equalities 
$$
\mathrm{coind}_{G}(\mathbb{R}Z_K)=\mathrm{wgt}_{G}(\mathbb{R}Z_K)=\delta(K_{\mathrm{supp}(G)}).  
$$ 
}
\end{corollary}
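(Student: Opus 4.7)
The plan is to deduce this corollary directly from Theorem \ref{main2}(ii): under the two combinatorial hypotheses, the left-hand and right-hand bounds in that chain coincide, forcing both $\mathrm{coind}_{G}(\mathbb{R}Z_K)$ and $\mathrm{wgt}_{G}(\mathbb{R}Z_K)$ to equal $\delta(K_{\mathrm{supp}(G)})$.

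First I would record an automatic monotonicity of the $\delta$-number. For any $g \in G$, since $\mathrm{supp}(g) \subset \mathrm{supp}(G)$, every non-face of $K_{\mathrm{supp}(g)}$ is still a non-face of $K_{\mathrm{supp}(G)}$ (the ambient vertex set only shrinks), so $\delta(K_{\mathrm{supp}(G)}) \leq \delta(K_{\mathrm{supp}(g)})$ holds unconditionally. This already shows that the interior of the chain in Theorem \ref{main2}(ii) is pinched between $\delta(K_{\mathrm{supp}(G)})$ and $\min_{g \in G \setminus \{1\}} \delta(K_{\mathrm{supp}(g)})$, and reduces the proof to exhibiting one $g^{\ast} \in G \setminus \{1\}$ satisfying $\delta(K_{\mathrm{supp}(g^{\ast})}) \leq \delta(K_{\mathrm{supp}(G)})$.

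For $g^{\ast}$ I would take the element $g_{0}$ supplied by the second hypothesis. Since $\mathrm{supp}(g_{0})$ is a non-face of $K$, it contains some minimal non-face $I$ of $K$, and by construction $I \subset \mathrm{supp}(g_{0}) \subset \mathrm{supp}(G)$. A quick check against the definitions shows that $I$ is simultaneously a minimal non-face of $K_{\mathrm{supp}(g_{0})}$ and of $K_{\mathrm{supp}(G)}$: every proper subset of $I$ lies in $\mathrm{supp}(g_{0})$ and is a face of $K$, hence is a face of each full subcomplex. The first hypothesis, that every minimal non-face of $K_{\mathrm{supp}(G)}$ has the same order, then forces $|I|-1 = \delta(K_{\mathrm{supp}(G)})$. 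Since $I$ is also a non-face of $K_{\mathrm{supp}(g_{0})}$, one concludes $\delta(K_{\mathrm{supp}(g_{0})}) \leq |I|-1 = \delta(K_{\mathrm{supp}(G)})$, and combining with the monotonicity above gives $\delta(K_{\mathrm{supp}(g_{0})}) = \delta(K_{\mathrm{supp}(G)})$. The sandwich in Theorem \ref{main2}(ii) therefore collapses to the claimed equalities.

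The one step that needs to be written out carefully is the clean transfer of the property ``minimal non-face'' among $K$, $K_{\mathrm{supp}(g_{0})}$ and $K_{\mathrm{supp}(G)}$; everything else is formal manipulation of the definition of $\delta$. I do not expect a genuine obstacle, since the corollary is essentially the observation that, once the sizes of minimal non-faces of $K_{\mathrm{supp}(G)}$ are equalized and at least one such non-face is witnessed inside a single $\mathrm{supp}(g_{0})$, the outer bound of Theorem \ref{main2}(ii) is attained by that $g_{0}$.
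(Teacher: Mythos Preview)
Your argument is correct and arrives at the same equality $\delta(K_{\mathrm{supp}(g_{0})}) = \delta(K_{\mathrm{supp}(G)})$ that the paper needs, but by a more elementary route. The paper introduces the auxiliary invariant $\mathrm{flag}(K)$ (the maximal dimension of a minimal non-face), proves the general inequalities $\delta(K) \leq \delta(K_I)$ and $\mathrm{flag}(K_I) \leq \mathrm{flag}(K)$ for full subcomplexes $K_I$, and characterizes the hypothesis ``all minimal non-faces have the same order'' as $\delta = \mathrm{flag}$; it then closes the sandwich $\delta(K_{\mathrm{supp}(G)}) \leq \delta(K_{\mathrm{supp}(g_0)}) \leq \mathrm{flag}(K_{\mathrm{supp}(g_0)}) \leq \mathrm{flag}(K_{\mathrm{supp}(G)}) = \delta(K_{\mathrm{supp}(G)})$. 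You bypass the flag number entirely by exhibiting a concrete minimal non-face $I \subset \mathrm{supp}(g_0)$ and checking directly that it is also minimal in $K_{\mathrm{supp}(G)}$, so its size is pinned down by the equal-order hypothesis. Your approach is shorter and self-contained; the paper's approach costs more setup but yields the flag-number machinery (Lemma~\ref{full subcomplex}, Corollary~\ref{flag}, Proposition~\ref{same order}) as reusable byproducts. One cosmetic point worth making explicit in your write-up is that $g_0 \neq 1$: this follows since $\mathrm{supp}(g_0) \notin K$ forces $\mathrm{supp}(g_0) \neq \emptyset$.
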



\section{Properties of the $G$-index, $G$-coindex and $G$-weight}

In this section, we show some properties of the $G$-index, $G$-coindex and $G$-weight. Throughout this section, let $G$ be a $p$-torus $(\mathbb{Z}/p)^m$, where $p$ is a prime number. We first observe a cohomological property of  the canonical map $i_n \colon B_nG \to BG$. 

\begin{lemma}
\label{B_nG}
The map $i_n^* \colon H^*(BG; \mathbb{Z}/p) \to H^*(B_nG; \mathbb{Z}/p)$ is injective for $* \leq n$ and not injective for $*=n+1$. 
\end{lemma}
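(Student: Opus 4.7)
The plan is to identify $i_n\colon B_nG \to BG$ with an $n$-equivalence and then to read off both halves from connectivity and dimension counts. First I would recall Milnor's join model, in which $E_nG = G^{*(n+1)}$ carries a free diagonal $G$-action with quotient $B_nG$. Since $G$ is non-empty and discrete (hence $(-1)$-connected), the standard identity $\mathrm{conn}(A*B)=\mathrm{conn}(A)+\mathrm{conn}(B)+2$ shows that $E_nG$ is $(n-1)$-connected; and since the join of $n+1$ zero-dimensional spaces is $n$-dimensional, $B_nG$ is an $n$-dimensional CW complex.

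Next, comparing the long exact sequences of the principal $G$-bundles $G\to E_nG \to B_nG$ and $G\to EG\to BG$, together with the contractibility of $EG$ and the $(n-1)$-connectivity of $E_nG$, yields that $i_n$ induces an isomorphism on $\pi_k$ for $k\le n-1$ and a surjection on $\pi_n$; in other words, $i_n$ is an $n$-equivalence. Applying relative Hurewicz to its mapping cylinder gives that the induced map on mod $p$ homology is an isomorphism for $k<n$ and a surjection for $k=n$, and dualising with $\mathbb{Z}/p$-coefficients via the universal coefficient theorem then shows that $i_n^*\colon H^k(BG;\mathbb{Z}/p)\to H^k(B_nG;\mathbb{Z}/p)$ is an isomorphism for $k<n$ and injective for $k=n$. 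This handles the injectivity assertion.

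For the non-injectivity at $*=n+1$, the dimension bound immediately gives $H^{n+1}(B_nG;\mathbb{Z}/p)=0$, whereas the standard computation of $H^*(B(\mathbb{Z}/p)^m;\mathbb{Z}/p)$ (polynomial for $p=2$, polynomial tensored with an exterior algebra for odd $p$) shows that $H^{n+1}(BG;\mathbb{Z}/p)$ is non-zero in every positive degree, so $i_n^*$ must have non-trivial kernel there. The only real technical point is the connectivity and dimension count for the finite joins; once that is settled, the rest of the argument is formal, so I do not anticipate a serious obstacle.
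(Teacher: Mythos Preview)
Your argument is correct. The paper follows the same overall strategy---both halves rest on the $(n-1)$-connectivity of $E_nG$ and the fact that $B_nG$ is an $n$-dimensional complex---but it extracts the cohomological injectivity by a slightly different tool: rather than showing that $i_n$ is an $n$-equivalence via the long exact sequences of the two principal bundles and then invoking relative Hurewicz plus universal coefficients, the paper works directly with the homotopy fibration $E_nG \to B_nG \xrightarrow{i_n} BG$ and reads off injectivity in degrees $\leq n$ from the Serre exact sequence (the fiber cohomology vanishes in degrees $1,\dots,n-1$, so the edge homomorphism is injective up to degree $n$). Your route has the mild advantage of sidestepping any worry about local coefficients over the non-simply-connected base $BG$, while the paper's route is a line shorter since it never passes through homotopy groups. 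The non-injectivity at degree $n+1$ is handled identically in both proofs.
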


\begin{proof}
There is a homotopy fibration $E_nG \to B_nG \xrightarrow{i_n} BG$. Since $E_nG$ is homotopy equivalent to a wedge of $n$-spheres, it is $(n-1)$-connected. Then the Serre exact sequence for the above homotopy fibration implies the first statement. On the other hand, it is well known that 
$$
H^*(B\mathbb{Z}/p; \mathbb{Z}/p) = \left\{
\begin{array}{ll}
\mathbb{Z}/p[x] & (p=2)\\[3pt]
\Lambda_{\mathbb{Z}/p}(x) \otimes \mathbb{Z}/p[y] & (p \neq 2), 
\end{array}
\right. 
$$
where $\deg x=1$ and $\deg y=2$, implying that $H^{n+1}(BG; \mathbb{Z}/p) \neq 0$. However, since $B_nG$ is an $n$-dimensional complex, $H^{n+1}(B_nG; \mathbb{Z}/p)=0$. Thus the second statement is proved. 
\end{proof}

Let a map $p_1 \colon EG \times_G E_nG \to BG $ be the first projection and a map $p_2 \colon EG \times_G E_nG \to B_nG$ be the second projection. 

\begin{lemma}
\label{commutative}
\textit{
The map $p_2 \colon EG \times_G E_nG \to B_nG$ is a homotopy equivalence and a diagram 
\begin{equation*}
\xymatrix@C=48pt{
	EG \times_G E_nG \ar[r]^-{p_2}\ar[d]_-{p_1} & B_nG \ar[d]^-{i_n}\\
	BG \ar@{=}[r] & BG
}
\end{equation*}
is homotopy commutative.
}
\end{lemma}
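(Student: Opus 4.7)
The plan is to prove the two claims in sequence, using the structure of $E_nG$ as a free $G$-space together with the universality of $EG$.

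For the homotopy equivalence of $p_2$: I would first observe that $E_nG$, being the $(n+1)$-fold join of the finite group $G$ with the diagonal action, is a free $G$-CW complex, so the orbit projection $E_nG \to B_nG$ is a principal $G$-bundle. Consequently $p_2 \colon EG \times_G E_nG \to B_nG$ is the fiber bundle associated to this principal bundle with fiber $EG$, sitting in the fibration
\[
EG \to EG \times_G E_nG \xrightarrow{p_2} B_nG.
\]
Since $EG$ is contractible, the long exact sequence of homotopy groups forces $p_2$ to be a weak homotopy equivalence, and as both sides carry natural CW structures, Whitehead's theorem upgrades this to a genuine homotopy equivalence.

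For the homotopy commutativity of the square, I would exhibit both composites as quotients by $G$ of natural equivariant maps to $EG$. Writing $\iota \colon E_nG \hookrightarrow EG$ for the inclusion of the $(n+1)$-fold join into the infinite join, $p_1$ and $i_n \circ p_2$ are obtained by passing to $G$-orbits from
\[
f_1 \colon EG \times E_nG \to EG, \quad (e,x) \mapsto e,
\qquad
f_2 \colon EG \times E_nG \to EG, \quad (e,x) \mapsto \iota(x),
\]
respectively. Both $f_1$ and $f_2$ are $G$-equivariant maps from the free $G$-CW complex $EG \times E_nG$ to $EG$. I would then invoke the standard fact that any two such maps are $G$-homotopic: $G$-equivariant maps out of a free $G$-CW complex into $EG$ correspond to sections of an associated bundle with contractible fiber $EG$, and the space of such sections is contractible. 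The resulting $G$-equivariant homotopy between $f_1$ and $f_2$ descends to a homotopy between $p_1$ and $i_n \circ p_2$.

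The main potential obstacle is not mathematical depth but bookkeeping: I must check that the inclusion $\iota$ is cellular and $G$-equivariant with respect to the chosen CW structures, and that the descent of the equivariant homotopy along the quotient $EG \times E_nG \to EG \times_G E_nG$ is well-defined. Granting the standard CW models of $EG$ and $E_nG$ as joins of the discrete group $G$, this is routine and introduces no serious difficulty.
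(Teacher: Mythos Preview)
Your proof is correct and follows essentially the same approach as the paper: contractibility of the fiber $EG$ gives the homotopy equivalence, and the square is obtained by passing to $G$-orbits from a square of $G$-maps into $EG$, which commutes up to $G$-homotopy because any two $G$-maps into $EG$ are $G$-homotopic. The paper cites this last fact from Gottlieb rather than sketching the section argument, and is otherwise less detailed, but the strategy is identical.
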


\begin{proof}
The map $p_2 \colon EG \times_G E_nG \to B_nG$ is a homotopy equivalence since its fiber $EG$ is contractible. The homotopy commutative diagram in the statement is obtained by taking the quotient of a diagram of $G$-maps
\begin{equation*}
\xymatrix@C=48pt{
	EG \times E_nG \ar[r]\ar[d] & E_nG \ar[d]\\
	EG \ar@{=}[r] & EG
}
\end{equation*}
which commutes up to $G$-homotopy by the fact that any two $G$-maps from some $G$-space to $EG$ are $G$-homotopic as in \cite[Lemma 5.1]{gottlieb}. Thus the proof is completed. 
\end{proof}

Let us show fundamental properties of the $G$-index, $G$-coindex and $G$-weight. 

\begin{proposition}
\label{property1}
\textit{
Let $X$ be a $G$-space. 
\begin{enumerate}[(i)]
  \setlength{\itemsep}{-0.75mm}
	\item There is an inequality 
$$
\mathrm{coind}_{G}(X) \leq \mathrm{wgt}_{G}(X).
$$ 
	\item If $G$ acts on $X$ freely, then there is an inequality 
$$
\mathrm{wgt}_{G}(X) \leq \mathrm{ind}_{G}(X). 
$$ 
\end{enumerate}
}
\end{proposition}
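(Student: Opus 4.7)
The plan is to reduce both inequalities to cohomological consequences of passing $G$-maps through Borel constructions, using Lemma \ref{commutative} to identify $EG \times_G E_nG$ with $B_nG$ over $BG$, and Lemma \ref{B_nG} to control the injectivity of $i_n^*$.

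For (i), I would set $n := \mathrm{coind}_G(X)$ and pick a $G$-map $f \colon E_nG \to X$. Applying the Borel construction gives a map $\bar f \colon EG \times_G E_nG \to EG \times_G X$ over $BG$. Composing with the homotopy equivalence $p_2$ of Lemma \ref{commutative} and passing to $\mathbb{Z}/p$-cohomology, the map $i_n^* \colon H^*(BG) \to H^*(B_nG)$ factors as
$$
H^*(BG) \longrightarrow H^*(EG \times_G X) \longrightarrow H^*(EG \times_G E_nG) \cong H^*(B_nG).
$$
By Lemma \ref{B_nG} the composite $i_n^*$ is injective in degrees $\le n$, so the first arrow is injective in degrees $\le n$ as well, giving $\mathrm{wgt}_G(X) \ge n$.

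For (ii), I would set $n := \mathrm{ind}_G(X)$; the freeness assumption is exactly what guarantees the existence of a $G$-map $g \colon X \to E_nG$. Applying Borel constructions gives $\bar g \colon EG \times_G X \to EG \times_G E_nG$ over $BG$, and again composing with $p_2$ from Lemma \ref{commutative} makes the canonical map $i_n^*$ factor through $H^*(EG \times_G X)$, but this time the other way:
$$
H^*(BG) \longrightarrow H^*(EG \times_G X) \longrightarrow H^*(B_nG).
$$
By Lemma \ref{B_nG}, the composite $i_n^*$ fails to be injective in degree $n+1$, so the first arrow also fails to be injective in degree $n+1$, yielding $\mathrm{wgt}_G(X) \le n$.

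There is no real obstacle: the lemmas already established do the heavy lifting. The only care required is to verify that the squares of Borel constructions commute up to homotopy over $BG$ (which follows from naturality of the Borel construction together with Lemma \ref{commutative}) and to observe that freeness is needed only in (ii), purely to ensure the $G$-map $X \to E_nG$ exists.
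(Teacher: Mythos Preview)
Part (i) is correct and matches the paper's argument. In part (ii), however, you have reversed the direction of the factorization. The $G$-map $g \colon X \to E_nG$ induces on Borel constructions a map $EG \times_G X \to EG \times_G E_nG \simeq B_nG$, so in cohomology the arrow goes from $H^*(B_nG)$ to $H^*(EG \times_G X)$, not the other way around. Consequently it is the projection map $H^*(BG) \to H^*(EG \times_G X)$ that factors as
\[
H^*(BG) \xrightarrow{\ i_n^*\ } H^*(B_nG) \longrightarrow H^*(EG \times_G X),
\]
not $i_n^*$ that factors through $H^*(EG \times_G X)$.

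This is not merely cosmetic: with your displayed factorization $H^*(BG) \to H^*(EG \times_G X) \to H^*(B_nG)$, the failure of injectivity of the composite $i_n^*$ does \emph{not} force the first arrow to be non-injective (a composite can fail to be injective while its first factor remains injective). With the correct factorization the argument is immediate: since $i_n^*$ is not injective in degree $n+1$ by Lemma~\ref{B_nG}, the composite $H^*(BG) \to H^*(EG \times_G X)$ cannot be injective in degree $n+1$ either, giving $\mathrm{wgt}_G(X) \le n$. This is exactly the paper's argument.
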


\begin{proof}
(i) Let $n:=\mathrm{coind}_{G}(X)$. Then there is a $G$-map $\phi \colon E_nG \to X$. By Lemma \ref{commutative}, we get a homotopy commutative diagram 
\begin{equation*}
\xymatrix@C=48pt{
	B_nG \ar[d]_-{i_n} & EG \times_G E_nG \ar[l]_-{p_2}^-{\simeq}\ar[r]^-{\mathrm{id} \times_G \phi}\ar[d]^-{p_1} & EG \times_G X \ar[d]\\
	BG \ar@{=}[r] & BG \ar@{=}[r] & BG
}
\end{equation*}
which induces a commutative diagram
\begin{equation*}
\xymatrix@C=54pt{
	H^k(B_nG; \mathbb{Z}/p) & H^k(EG \times_G X; \mathbb{Z}/p) \ar[l]\\
	H^k(BG; \mathbb{Z}/p) \ar[u]^-{i_n^*} \ar@{=}[r] & H^k(BG; \mathbb{Z}/p). \ar[u]
}
\end{equation*}
By the first statement of Lemma \ref{B_nG}, the map $H^k(BG; \mathbb{Z}/p) \to H^k(EG \times_G X; \mathbb{Z}/p)$ must be injective for $k \leq n$. Thus we obtain the inequality $\mathrm{wgt}_{G}(X) \geq n=\mathrm{coind}_{G}(X)$. 

(ii) Let $m:=\mathrm{ind}_{G}(X)$. Then there is a $G$-map $\psi \colon X \to E_mG$. By Lemma \ref{commutative}, we get a homotopy commutative diagram
\begin{equation*}
\xymatrix@C=48pt{
	EG \times_G X \ar[d]\ar[r]^-{\mathrm{id} \times_G \psi} & EG \times_G E_mG \ar[r]^-{p_2}_-{\simeq}\ar[d]_-{p_1} & B_mG\ar[d]^-{i_n} \\
	BG \ar@{=}[r] & BG \ar@{=}[r] & BG 
}
\end{equation*}
which induces a commutative diagram
\begin{equation*}
\xymatrix@C=54pt{
	H^k(B_mG; \mathbb{Z}/p)\ar[r] & H^k(EG \times_G X; \mathbb{Z}/p)\\
	H^k(BG; \mathbb{Z}/p) \ar[u]^-{i_n^*} \ar@{=}[r] & H^k(BG; \mathbb{Z}/p). \ar[u] 
}
\end{equation*}
By the second statement of Lemma \ref{B_nG}, the map $H^k(BG; \mathbb{Z}/p) \to H^k(EG \times_G X; \mathbb{Z}/p)$ cannot be injective for $k=m+1$. Thus we also obtain the inequality $\mathrm{wgt}_{G}(X) \leq m=\mathrm{ind}_{G}(X)$. 
\end{proof}

\begin{proposition}
\label{map}
\textit{
Let $X$ and $Y$ be $G$-spaces, and suppose that there is a $G$-map $f \colon X \to Y$. 
\begin{enumerate}[(i)]
  \setlength{\itemsep}{-0.75mm}
	\item There are inequalities 
$$
\mathrm{coind}_{G}(X) \leq \mathrm{coind}_{G}(Y) \quad \text{and} \quad \mathrm{wgt}_{G}(X) \leq \mathrm{wgt}_{G}(Y). 
$$ 
	\item If $G$ acts on $X$ and $Y$ freely, then there is an inequality 
$$
\mathrm{ind}_{G}(X) \leq \mathrm{ind}_{G}(Y). 
$$ 
\end{enumerate}
}
\end{proposition}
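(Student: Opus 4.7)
The plan is straightforward and parallels the template of Proposition \ref{property1}: compose $G$-maps in order to transport the witnesses for coindex and index through $f$, and apply the Borel construction to transport the weight. The only thing to watch is the direction of each inequality, since coindex and weight are covariant along $G$-maps, whereas index is contravariant.

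For the coindex half of (i), I would set $n := \mathrm{coind}_G(X)$ and choose a $G$-map $\phi \colon E_nG \to X$ realizing this value. The composite $f \circ \phi \colon E_nG \to Y$ is again a $G$-map, giving $\mathrm{coind}_G(Y) \geq n$. The index statement in (ii) is dual: set $m := \mathrm{ind}_G(Y)$, pick a $G$-map $\psi \colon Y \to E_mG$, and observe that $\psi \circ f \colon X \to E_mG$ yields $\mathrm{ind}_G(X) \leq m$. Here the assumption that $G$ acts freely on both $X$ and $Y$ is needed only so that both indices are defined in the first place.

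For the weight half of (i), I would push $f$ through the Borel construction to obtain a map $\mathrm{id} \times_G f \colon EG \times_G X \to EG \times_G Y$ that is compatible with the canonical projections to $BG$. Taking cohomology with $\mathbb{Z}/p$ coefficients, this yields a factorization
$$
H^k(BG; \mathbb{Z}/p) \longrightarrow H^k(EG \times_G Y; \mathbb{Z}/p) \longrightarrow H^k(EG \times_G X; \mathbb{Z}/p)
$$
of the bundle projection for $X$. Setting $n := \mathrm{wgt}_G(X)$, the composite is injective for every $k \leq n$ by definition of the weight, which forces the first factor to be injective in the same range and hence gives $\mathrm{wgt}_G(Y) \geq n$.

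There is no real obstacle: every step is formal. The only technical point worth verifying carefully is that the Borel-construction map $\mathrm{id} \times_G f$ really does sit over $BG$, so that the claimed factorization of the cohomology projection is genuine; this is immediate from the naturality of the quotient, but should be stated explicitly in the write-up.
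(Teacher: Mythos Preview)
Your proposal is correct and matches the paper's proof essentially line for line: the paper also composes a witness $\phi\colon E_nG\to X$ with $f$ for the coindex, composes $f$ with a witness $\psi\colon Y\to E_mG$ for the index, and for the weight uses the commutative square of Borel constructions over $BG$ to factor $H^k(BG)\to H^k(EG\times_G X)$ through $H^k(EG\times_G Y)$. The only cosmetic difference is that the paper draws the commutative diagrams explicitly; your remark that index is ``contravariant'' is a bit misleading since all three inequalities go in the same direction, but this does not affect the argument.
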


\begin{proof}
(i)~Let $n:=\mathrm{coind}_{G}(X)$, and take a $G$-map $\phi \colon E_nG \to X$. Then the composition $f \circ \phi \colon E_nG \to X \to Y$ is also a $G$-map. Thus we get the inequality $\mathrm{coind}_{G}(Y) \geq n=\mathrm{coind}_{G}(X)$. Let $l:=\mathrm{wgt}_{G}(X)$. Since the map $f \colon X \to Y$ is a $G$-map, it induces a commutative diagram 
\begin{equation*}
\xymatrix@C=54pt{
	EG \times_G X \ar[r]^-{\mathrm{id} \times_G f}\ar[d] & EG \times_G Y \ar[d]\\
	BG \ar@{=}[r] & BG. 
}
\end{equation*}
Then we get a commutative diagram
\begin{equation*}
\xymatrix@C=54pt{
	H^k(EG \times_G Y; \mathbb{Z}/p)\ar[r]^-{(\mathrm{id} \times_G f)^*} & H^k(EG \times_G X; \mathbb{Z}/p)\\
	H^k(BG; \mathbb{Z}/p) \ar[u] \ar@{=}[r] & H^k(BG; \mathbb{Z}/p). \ar[u] 
}
\end{equation*}
Since the map $H^k(BG; \mathbb{Z}/p) \to H^k(EG \times_G X; \mathbb{Z}/p)$ is injective for $k \leq l$, the map $H^k(BG; \mathbb{Z}/p) \to H^k(EG \times_G Y; \mathbb{Z}/p)$ must also be injective for $k \leq l$. Thus we get the inequality $\mathrm{wgt}_{G}(Y) \geq l=\mathrm{wgt}_{G}(X)$.

(ii)~Let $m:=\mathrm{ind}_{G}(Y)$, and take a $G$-map $\psi \colon Y \to E_mG$. Then the composition $\psi \circ f \colon X \to Y \to E_mG$ is also a $G$-map. Thus we get the inequality $\mathrm{ind}_{G}(X) \leq m=\mathrm{ind}_{G}(Y)$. 
\end{proof}

\begin{proposition}
\label{subgroup}
\textit{
Let $X$ be a $G$-space, and $K < G$ be a non-trivial subgroup. 
\begin{enumerate}[(i)]
  \setlength{\itemsep}{-0.75mm}
	\item There are inequalities 
$$
\mathrm{coind}_{G}(X) \leq \mathrm{coind}_{K}(X) \quad \text{and} \quad \mathrm{wgt}_{G}(X) \leq \mathrm{wgt}_{K}(X).
$$ 
	\item If $G$ acts on $X$ and $Y$ freely, then there is an inequality 
$$
\mathrm{ind}_{K}(X) \leq \mathrm{ind}_{G}(X).
$$ 
\end{enumerate}
}
\end{proposition}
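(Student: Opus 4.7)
The plan is to establish the three inequalities by exploiting the subgroup inclusion $K \hookrightarrow G$ together with the $p$-torus structure of $G$.

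First I would prove (ii). Since $G$ is a $p$-torus, $K$ is a direct summand: $G = K \oplus K'$ for some complementary subgroup $K'$. The group-theoretic projection $\sigma \colon G \to K$ is $K$-equivariant (being the identity on $K$), so the $(m+1)$-fold join functor produces a $K$-equivariant map $E_mG \to E_mK$. Given a $G$-map $\psi \colon X \to E_mG$ with $m = \mathrm{ind}_G(X)$, the composite $X \xrightarrow{\psi} E_mG \to E_mK$ is a $K$-map, whence $\mathrm{ind}_K(X) \leq m = \mathrm{ind}_G(X)$.

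For the coindex inequality of (i), the inclusion $K \hookrightarrow G$ induces a $K$-equivariant inclusion $E_nK \hookrightarrow E_nG$. Precomposing a $G$-map $\phi \colon E_nG \to X$ with $n = \mathrm{coind}_G(X)$ with this inclusion (and restricting the equivariance to $K$) gives a $K$-map $E_nK \to X$, so $\mathrm{coind}_K(X) \geq n = \mathrm{coind}_G(X)$.

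For the weight inequality of (i), I would use the commutative square
\begin{equation*}
\xymatrix@C=36pt{
EG \times_K X \ar[r]^-{\pi}\ar[d]_-{p} & EG \times_G X \ar[d]^-{q} \\
BK \ar[r]^-{\rho} & BG
}
\end{equation*}
in which $\pi$ is the quotient map by the residual $G/K$-action and $\rho$ is induced by $K \hookrightarrow G$. This square exhibits the $K$-Borel fibration with fiber $X$ as the pullback of the corresponding $G$-Borel fibration along $\rho$. Via the resulting map of Serre spectral sequences together with the surjectivity of $\rho^* \colon H^*(BG;\mathbb{Z}/p) \to H^*(BK;\mathbb{Z}/p)$ (ensured by the splitting $G = K \oplus K'$), injectivity of the $G$-edge map $H^k(BG;\mathbb{Z}/p) \to H^k(EG \times_G X;\mathbb{Z}/p)$ for $k \leq l := \mathrm{wgt}_G(X)$---equivalent to the vanishing of all differentials landing in the zero row of the $G$-page in that range---should propagate by naturality to vanishing of the corresponding $K$-differentials, giving injectivity of the $K$-edge map through degree $l$ and hence $\mathrm{wgt}_K(X) \geq l = \mathrm{wgt}_G(X)$.

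The main obstacle I expect is in this last step: one needs $\rho^*$ to remain surjective on all $E_r$-pages, which is immediate on $E_2$ when $G$ acts trivially on $H^*(X;\mathbb{Z}/p)$ but demands a careful analysis of the $\mathbb{Z}/p[G]$-module structure of the fiber cohomology under the splitting $G = K \oplus K'$ in general.
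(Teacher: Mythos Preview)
Your arguments for $\mathrm{ind}$ and $\mathrm{coind}$ are correct and coincide with the paper's. For the weight inequality, however, your spectral-sequence approach runs into exactly the obstacle you flag: naturality of the differentials goes from the $G$-sequence to the $K$-sequence along $\rho^*$, so vanishing of a differential in the $G$-sequence forces vanishing in the $K$-sequence only if the map $E_r^{*,*}(G) \to E_r^{*,*}(K)$ is surjective on the \emph{source} bidegrees of those differentials on every page. Surjectivity on the $E_2$ zero row does not propagate automatically, and with nontrivial $G$-action on $H^*(X;\mathbb{Z}/p)$ this becomes a genuine issue, not just bookkeeping.

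The paper sidesteps this entirely by using the retraction $r \colon G \to K$ (the same splitting you already invoke for $\mathrm{ind}$) rather than the inclusion. This produces a map of Borel constructions in the \emph{other} direction,
\[
\xymatrix@C=48pt{
EG \times_G X \ar[r]^-{r \times_G \mathrm{id}} \ar[d] & EK \times_K X \ar[d] \\
BG \ar[r]^-{Br} & BK,
}
\]
and now $(Br)^* \colon H^k(BK;\mathbb{Z}/p) \to H^k(BG;\mathbb{Z}/p)$ is \emph{injective} (since $r$ is a retraction). A one-line diagram chase then shows that injectivity of $H^k(BG;\mathbb{Z}/p) \to H^k(EG \times_G X;\mathbb{Z}/p)$ for $k \le l$ forces injectivity of $H^k(BK;\mathbb{Z}/p) \to H^k(EK \times_K X;\mathbb{Z}/p)$ in the same range, with no spectral sequence and no hypothesis on the fiber action.
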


\begin{proof}
(i)~Let $n:=\mathrm{coind}_{G}(X)$, and take a $G$-map $\phi \colon E_nG \to X$. Then the composition $\phi \circ E_ni \colon E_nK \to E_nG \to X$ is a $K$-map where the map $E_ni \colon E_nK \to E_nG$ is a $K$-map induced by the inclusion homomorphism $i \colon K \to G$. Thus we get the inequality $\mathrm{coind}_{K}(X) \geq n=\mathrm{coind}_{G}(X)$. Let $l:=\mathrm{wgt}_{G}(X)$. Note that since any subgroup of  $G=(\mathbb{Z}/p)^m$ is a direct summand, there is a retraction $r \colon G \to K$ which induces a commutative diagram
\begin{equation*}
\xymatrix@C=54pt{
	EG \times_G X \ar[r]^-{r \times _G \mathrm{id}}\ar[d] & EK \times_K X \ar[d]\\
	BG \ar[r]^-{Br} & BK. 
}
\end{equation*}
Then we get a commutative diagram
\begin{equation*}
\xymatrix@C=54pt{
	H^k(EK \times_K X; \mathbb{Z}/p)\ar[r]^-{(r \times_G \mathrm{id})^*} & H^k(EG \times_G X; \mathbb{Z}/p)\\
	H^k(BH; \mathbb{Z}/p) \ar[u] \ar[r]^-{(Br)^*} & H^k(BG; \mathbb{Z}/p). \ar[u] 
}
\end{equation*}
Note that the map $(Br)^* \colon H^k(BK; \mathbb{Z}/p) \to H^k(BG; \mathbb{Z}/p)$ is injective since the homomorphism $r \colon G \to K$ is a retraction. Since the map $H^k(BG; \mathbb{Z}/p) \to H^k(EG \times_G X; \mathbb{Z}/p)$ is injective for $k \leq l$, the map $H^k(BH; \mathbb{Z}/p) \to H^k(EK \times_K X; \mathbb{Z}/p)$ must be injective for $k \leq l$. Thus we get the inequality $\mathrm{wgt}_{K}(X) \geq l=\mathrm{wgt}_{G}(X)$.

(ii)~Let $m:=\mathrm{ind}_{G}(X)$, and take a $G$-map $\psi \colon X \to E_mG$. Then the composition $E_mr \circ \psi \colon X \to E_mG \to E_mK$ is a $K$-map. Thus we get the inequality $\mathrm{ind}_{K}(X) \leq m=\mathrm{ind}_{G}(X)$. 

\end{proof}

\begin{proposition}
\label{property}
\textit{
Let $X$ be a $G$-space. 
\begin{enumerate}[(i)]
  \setlength{\itemsep}{-0.75mm}
	\item There are equalities 
$$
\mathrm{coind}_{G}(E_nG)=\mathrm{ind}_{G}(E_nG)=n. 
$$
	\item If $X$ is $(n-1)$-connected, then there is an inequality
$$
\mathrm{coind}_{G}(X) \geq n. 
$$
	\item If $X$ is a free $G$-complex, then there is an inequality
$$
\mathrm{ind}_{G}(X) \leq \dim X. 
$$
\end{enumerate}
}
\end{proposition}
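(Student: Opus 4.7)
I would prove the three parts in the order (iii), (ii), (i). Parts (iii) and (ii) both follow from one application of equivariant obstruction theory on free $G$-CW complexes, while (i) follows by sandwiching the identity map between the inequalities of Proposition \ref{property1}.

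The central observation that I would invoke is the following equivariant extension principle: if $W$ is a free $G$-CW complex with $\dim W \le n$ and $Z$ is any nonempty $(n-1)$-connected $G$-space, then every $G$-map defined on the $0$-skeleton of $W$ extends to a $G$-map $W \to Z$. Indeed, each equivariant cell of $W$ has the form $G \times D^k$ attached along a $G$-map $G \times S^{k-1} \to W^{(k-1)}$, and because the action on the source is free, extending a $G$-map across such a cell amounts to extending a single ordinary map $S^{k-1}\to Z$ over $D^k$; the obstruction lies in $\pi_{k-1}(Z)$ and vanishes for $k \le n$ by the connectivity hypothesis. Part (iii) is then the case $W = X$, $Z = E_nG$: since $E_nG$ is homotopy equivalent to a wedge of $n$-spheres it is $(n-1)$-connected, so the resulting $G$-map $X \to E_nG$ witnesses $\mathrm{ind}_G(X) \le \dim X$. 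Part (ii) is the case $W = E_nG$ (which carries a standard free $G$-CW structure of dimension $n$ with one equivariant cell in each dimension $0, 1, \dots, n$ coming from the iterated join) and $Z = X$, yielding the $G$-map $E_nG \to X$ needed for $\mathrm{coind}_G(X) \ge n$.

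For (i), the identity $E_nG \to E_nG$ is itself a $G$-map, so $\mathrm{ind}_G(E_nG) \le n$ and $\mathrm{coind}_G(E_nG) \ge n$ directly from the definitions. Combining these with Proposition \ref{property1} gives
\[
n \le \mathrm{coind}_G(E_nG) \le \mathrm{wgt}_G(E_nG) \le \mathrm{ind}_G(E_nG) \le n,
\]
so all three invariants collapse to $n$. The main subtlety I expect lies in the equivariant obstruction argument: one must be precise about how freeness of the source is used to reduce the equivariant extension problem over $G\times D^k$ to a single ordinary extension problem, and one must commit to the standard free $G$-CW structure on $E_nG$ clearly enough that the dimension count underlying part (ii) is unambiguous.
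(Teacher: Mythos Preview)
Your proposal is correct. The paper itself does not supply an argument but simply cites \cite[Proposition~6.2.4]{matousek}; the equivariant cell-by-cell extension you outline is exactly the standard obstruction-theoretic proof one finds in that reference, so your approach and the paper's intended one coincide. One small imprecision worth cleaning up: for a general $p$-torus $G=(\mathbb{Z}/p)^m$ the join $E_nG$ need not have \emph{one} equivariant cell per dimension (already $G*G$ has two free $0$-orbits and $|G|$ free $1$-orbits), but it is nonetheless a free $G$-CW complex of dimension $n$, and that is the only fact your extension argument actually uses.
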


\begin{proof}
This proposition follows from the proof of \cite[Proposition 6.2.4]{matousek}. 
\end{proof}


\section{Proof of the main theorems}
In this section, we give proofs of the main theorems. We first consider the case that $K$ is a full simplex $\Delta^{m-1}$. 

\begin{lemma}
\label{Delta}
\textit{
Let $G<(\mathbb{Z}/2)^m$ be a non-trivial subgroup. Then there are equalities
$$
\mathrm{coind}_G(\mathbb{R}Z_{\Delta^{m-1}}) = \mathrm{wgt}_G(\mathbb{R}Z_{\Delta^{m-1}}) = \infty. 
$$
}
\end{lemma}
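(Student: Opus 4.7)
The plan is to reduce the lemma to a simple topological fact about the underlying space plus the two structural inequalities already established in Section~2. First I would unwind the definition of $\mathbb{R}Z_K$ at $K=\Delta^{m-1}$: the only face one needs to consider is the top face $\sigma=[m]$, for which $(D^1,S^0)^{[m]}=(D^1)^m$, so $\mathbb{R}Z_{\Delta^{m-1}}=(D^1)^m$. This space is contractible (in fact $G$-contractible to the fixed point $0$), and in particular it is $(n-1)$-connected for every $n\geq 1$.

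Next I would invoke Proposition~\ref{property}(ii): since $\mathbb{R}Z_{\Delta^{m-1}}$ is $(n-1)$-connected for arbitrary $n$, we obtain
$$\mathrm{coind}_G(\mathbb{R}Z_{\Delta^{m-1}})\geq n\qquad\text{for every }n\geq 0,$$
so $\mathrm{coind}_G(\mathbb{R}Z_{\Delta^{m-1}})=\infty$. The $G$-weight is then forced to be infinite as well: by Proposition~\ref{property1}(i) we always have $\mathrm{coind}_G(X)\leq\mathrm{wgt}_G(X)$, so $\mathrm{wgt}_G(\mathbb{R}Z_{\Delta^{m-1}})=\infty$. This gives both equalities claimed in the lemma.

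There is essentially no obstacle here; the only small thing to be careful about is the convention for $\mathrm{coind}_G$ on contractible spaces (the inequality from Proposition~\ref{property}(ii) is what drives the argument, and it does not require freeness of the action, which is good because $G$ of course does not act freely on $(D^1)^m$). One could alternatively give a direct construction of a $G$-map $E_nG\to(D^1)^m$ for every $n$ by using the fixed point $0\in(D^1)^m$ and mapping everything there, but going through Proposition~\ref{property}(ii) keeps the argument uniform with the rest of Section~2 and avoids invoking any choice of trivialization.
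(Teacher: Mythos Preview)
Your proof is correct and essentially identical to the paper's: both observe that $\mathbb{R}Z_{\Delta^{m-1}}=(D^1)^m$ is contractible, invoke Proposition~\ref{property}(ii) to get $\mathrm{coind}_G\geq n$ for all $n$, and then use Proposition~\ref{property1}(i) to conclude $\mathrm{wgt}_G=\infty$ as well. Your write-up just adds a bit more detail on why $\mathbb{R}Z_{\Delta^{m-1}}=(D^1)^m$ and notes that freeness is not needed.
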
 

\begin{proof}
Since $\mathbb{R}Z_{\Delta^{m-1}}=(D^1)^m$ is contractible, there are inequalities
$$
n \leq \mathrm{coind}_G(\mathbb{R}Z_{\Delta^{m-1}}) \leq \mathrm{wgt}_G(\mathbb{R}Z_{\Delta^{m-1}})
$$
for any $n\geq0$ by Proposition \ref{property1} and Proposition \ref{property}. 
\end{proof}

\begin{lemma}
\label{connectivity}
\textit{
Let $K$ be a simplicial complex on the vertex set $[m]$. If $K$ is $q$-neighborly, then $\mathbb{R}Z_K$ is $q$-connected. 
}
\end{lemma}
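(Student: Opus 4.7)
The plan is to exploit the cubical CW structure on $(D^1)^m$ and the fact that $\mathbb{R}Z_K$ sits inside it as a CW subcomplex whose skeleta, in the relevant range, coincide with those of the ambient cube.

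First I would set up the product CW structure on $(D^1)^m$ coming from the standard structure on $D^1 = [-1, 1]$ with two $0$-cells $\pm 1$ and one $1$-cell. The open $k$-cells of $(D^1)^m$ are then indexed by pairs $(\sigma, \varepsilon)$ with $\sigma \subset [m]$ of size $k$ and $\varepsilon \in \{\pm 1\}^{[m] \setminus \sigma}$, where the coordinates in $\sigma$ range in $(-1, 1)$ and the coordinates outside $\sigma$ are fixed by $\varepsilon$. Under this structure, the subspace $(D^1, S^0)^\sigma$ is the union of the closed $|\sigma|$-cubes obtained by fixing $\varepsilon$ on $[m]\setminus\sigma$, so $\mathbb{R}Z_K = \bigcup_{\sigma \in K} (D^1, S^0)^\sigma$ is exactly the union of (closures of) the open cells whose active set $\sigma$ is a face of $K$. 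In particular it is a CW subcomplex of $(D^1)^m$.

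Next I would observe that $q$-neighborliness of $K$ means every subset of $[m]$ of cardinality at most $q+1$ is a face. Hence every open cell of $(D^1)^m$ of dimension at most $q+1$ belongs to $\mathbb{R}Z_K$, so the $(q+1)$-skeleta of $(D^1)^m$ and $\mathbb{R}Z_K$ coincide.

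To conclude I would apply cellular approximation. Given any map $f \colon S^i \to \mathbb{R}Z_K$ with $i \leq q$, I may first replace $f$ by a cellular map so that its image lies in the $i$-skeleton shared by $\mathbb{R}Z_K$ and $(D^1)^m$. Composing with the inclusion into the contractible cube $(D^1)^m$ gives a null-homotopy $F \colon D^{i+1} \to (D^1)^m$; by cellular approximation rel $S^i$, I may homotope $F$ to land in the $(i+1)$-skeleton of $(D^1)^m$, which is contained in $\mathbb{R}Z_K$ because $i+1 \leq q+1$. This produces a null-homotopy of $f$ inside $\mathbb{R}Z_K$, proving $\pi_i(\mathbb{R}Z_K) = 0$ for all $0 \leq i \leq q$. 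The only delicate point is the book-keeping between the two CW structures and the use of cellular approximation relative to the boundary already lying in the common skeleton; otherwise the argument is direct.
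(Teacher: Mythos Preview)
Your argument is correct and follows essentially the same route as the paper: both observe that $q$-neighborliness forces the $(q{+}1)$-skeleton of $\mathbb{R}Z_K$ to coincide with that of $(D^1)^m$, and then invoke cellular approximation together with the contractibility of $(D^1)^m$ to conclude $\pi_i(\mathbb{R}Z_K)=0$ for $i\leq q$. The only cosmetic difference is that the paper remarks in passing that this common $(q{+}1)$-skeleton is a wedge of $(q{+}1)$-spheres, whereas you carry out the cellular-approximation step more explicitly; neither changes the substance of the proof.
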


\begin{proof}
Since $K$ is $q$-neighborly, the $(q+1)$-skeleton of $\mathbb{R}Z_K$ equals to the $(q+1)$-skeleton of $\mathbb{R}Z_{\Delta^{m-1}}=(D^1)^m$ which is homotopy equivalent to a wedge of $(q+1)$-spheres. Thus by the cellular approximation theorem, we get 
$$
\pi_i \left( \mathbb{R}Z_K \right) = \pi_i \left( (D^1)^m \right) = 0 \quad \text{for $i \leq q$}, 
$$ 
completing the proof. 
\end{proof}

\begin{corollary}
\label{lower}
\textit{
Let $K$ be a simplicial complex on the vertex set $[m]$ and $G<(\mathbb{Z}/2)^m$ be a non-trivial subgroup. Then there is an inequality 
$$
\delta(K) \leq \mathrm{coind}_{G}(\mathbb{R}Z_K). 
$$
}
\end{corollary}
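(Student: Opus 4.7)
The plan is to combine the connectivity lemma (Lemma \ref{connectivity}) with the coindex lower bound via connectivity (Proposition \ref{property}(ii)), after observing that the $\delta$-number precisely controls neighborliness of $K$.

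First I would dispose of the degenerate case: if $K$ is the full simplex $\Delta^{m-1}$, then $\delta(K)=\infty$ by definition, and Lemma \ref{Delta} already gives $\mathrm{coind}_G(\mathbb{R}Z_K)=\infty$, so the desired inequality holds trivially.

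Next, assuming $K\neq\Delta^{m-1}$, set $d:=\delta(K)$, which is a non-negative integer. By definition of $\delta(K)$, every subset $I\subset[m]$ with $|I|\leq d$ is a face of $K$, which is exactly the statement that $K$ is $(d-1)$-neighborly. Then Lemma \ref{connectivity} (applied with $q=d-1$) tells us that $\mathbb{R}Z_K$ is $(d-1)$-connected. Finally, Proposition \ref{property}(ii) yields
$$
\mathrm{coind}_G(\mathbb{R}Z_K)\geq d=\delta(K),
$$
which is the required inequality.

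There is no real obstacle here: the corollary is essentially a direct packaging of the two preceding results, the only subtle point being the indexing conventions (dimension vs.\ cardinality for faces, and the $-1$ shift between neighborliness and connectivity) and the separate handling of the simplex case where $\delta(K)=\infty$.
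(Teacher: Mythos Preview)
Your proof is correct and follows essentially the same route as the paper: pass from $\delta(K)$ to $(\delta(K)-1)$-neighborliness, invoke Lemma~\ref{connectivity} for connectivity, then apply Proposition~\ref{property}(ii). The only difference is that you explicitly separate out the full-simplex case via Lemma~\ref{Delta}, whereas the paper's two-line proof leaves this implicit.
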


\begin{proof}
Let $q:=\delta(K)$. Since $K$ is $(q-1)$-neighborly, $\mathbb{R}Z_K$ is $(q-1)$-connected by Lemma \ref{connectivity}. Thus the proof is completed by Proposition \ref{property}.  
\end{proof}

\begin{lemma}
\label{diagonal}
\textit{
Let $K$ be a simplicial complex on the  vertex set $[m]$ Suppose that $K$ is not a full simplex, and $G=\mathbb{Z}/2$ acts on $\mathbb{R}Z_K$ diagonally. Then there are equalities
$$
\mathrm{coind}_G(\mathbb{R}Z_K)=\mathrm{wgt}_G(\mathbb{R}Z_K)=\mathrm{ind}_G(\mathbb{R}Z_K)=\delta(K). 
$$ 
}
\end{lemma}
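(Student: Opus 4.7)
The plan is to organize the equalities into a lower bound and an upper bound, then invoke Propositions \ref{property1} and \ref{property} together with Corollary \ref{lower} to sandwich everything between $\delta(K)$ and $\delta(K)$. The only piece that requires real construction is an upper bound on $\mathrm{ind}_G(\mathbb{R}Z_K)$, so that is where I will focus.

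First I would assemble the chain of inequalities already available. Since $G$ acts diagonally, $\mathrm{supp}(G) = [m]$ and $K_{\mathrm{supp}(G)} = K$, so Corollary \ref{lower} gives $\delta(K) \leq \mathrm{coind}_G(\mathbb{R}Z_K)$, and Proposition \ref{property1}(i) yields $\mathrm{coind}_G(\mathbb{R}Z_K) \leq \mathrm{wgt}_G(\mathbb{R}Z_K)$. To pass from $\mathrm{wgt}_G$ to $\mathrm{ind}_G$ via Proposition \ref{property1}(ii), I need freeness of the diagonal action; the only possible fixed point of the nontrivial element $(-1,\dots,-1)$ on $(D^1)^m$ is the origin, and $0 \in \mathbb{R}Z_K$ would force $[m] \in K$, contradicting the hypothesis that $K$ is not a full simplex. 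Hence the action is free and $\mathrm{wgt}_G(\mathbb{R}Z_K) \leq \mathrm{ind}_G(\mathbb{R}Z_K)$.

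It remains to prove the upper bound $\mathrm{ind}_G(\mathbb{R}Z_K) \leq \delta(K)$. Set $\delta:=\delta(K)$ and pick a non-face $I \subset [m]$ with $|I|=\delta+1$. Consider the coordinate projection
\[
\pi_I \colon (D^1)^m \longrightarrow (D^1)^I, \qquad (x_1,\dots,x_m) \longmapsto (x_i)_{i \in I},
\]
which is $G$-equivariant for the diagonal actions. For any $x \in \mathbb{R}Z_K$ there is some $\sigma \in K$ with $x_j \in S^0$ for all $j \notin \sigma$; because $I$ is a non-face, $I \not\subset \sigma$, so at least one coordinate of $\pi_I(x)$ lies in $S^0$. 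Therefore $\pi_I$ restricts to a $G$-equivariant map
\[
\mathbb{R}Z_K \longrightarrow \partial (D^1)^I \cong S^\delta,
\]
where the diagonal $\mathbb{Z}/2$-action on the right is the antipodal action. Since $E_\delta G$ is $G$-homotopy equivalent to $S^\delta$ with antipodal action, this witnesses $\mathrm{ind}_G(\mathbb{R}Z_K) \leq \delta$, closing the loop.

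The main technical step, and really the only nonformal ingredient, is the construction of the equivariant map to $S^{\delta}$. The rest is bookkeeping with the previously established inequalities. I expect no further obstacle, although one should double-check that the diagonal $G$-action on $\partial (D^1)^I$ is indeed the antipodal action after identifying the boundary of the cube with a sphere (which is immediate since the antipodal map on $(D^1)^I$ preserves the boundary and acts without fixed points).
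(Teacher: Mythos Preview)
Your proof is correct and follows essentially the same route as the paper: sandwich via Corollary~\ref{lower} and Proposition~\ref{property1}, then bound $\mathrm{ind}_G$ above by projecting onto the coordinates of a minimal non-face $I$. The only cosmetic difference is that the paper phrases the target of the projection as $\mathbb{R}Z_{K_I}=\mathbb{R}Z_{\partial\Delta^q}=S^q$ rather than $\partial (D^1)^I\cong S^{\delta}$, which is the same space.
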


\begin{proof}
We first note that the diagonal action of $G$ on $\mathbb{R}Z_K$ is free since $K$ is not a full simplex. It suffices to show that there are inequalities
$$
\delta(K) \leq \mathrm{coind}_G(\mathbb{R}Z_K) \leq \mathrm{wgt}_G(\mathbb{R}Z_K) \leq \mathrm{ind}_G(\mathbb{R}Z_K) \leq \delta(K). 
$$
The first inequality follows from Corollary \ref{lower}. The second and the third inequalities hold by Proposition \ref{property1}. If we set $q:=\delta(K)$, then there is a minimal non-face $I=\{i_1, \dots, i_{q+1}\} \subset [m]$ of $K$ which yields a $G$-map
$$
\mathbb{R}Z_K \to \mathbb{R}Z_{K_I}=\mathbb{R}Z_{\partial \Delta^q}=S^q \quad (x_1, \dots, x_m) \mapsto (x_{i_1}, \dots, x_{i_{q+1}}), 
$$
implying the last inequality by Propositions \ref{map} and \ref{property}. 
\end{proof}

\begin{proof}[proof of Theorems \ref{main1} and \ref{main2}]
Let us consider the inclusion $\mathbb{R}Z_{K_{\mathrm{supp}(G)}} \to \mathbb{R}Z_{K}$ and the projection $\mathbb{R}Z_K \to \mathbb{R}Z_{K_{\mathrm{supp}(G)}}$. Since these maps are $G$-maps, the $G$-index, $G$-coindex and $G$-weight of $\mathbb{R}Z_K$ are the same as those of $\mathbb{R}Z_{K_{\mathrm{supp}(G)}}$ respectively by Proposition \ref{map}. Thus we may assume that $K_{\mathrm{supp}(G)}=K$, or equivalently, $\mathrm{supp}(G)=[m]$.

First, we consider the case that $\mathrm{rank}\,G=1$. Then the action of $G$ on $\mathbb{R}Z_K$ is diagonal since $\mathrm{supp}(G)=[m]$. If the diagonal action of $G$ is not free, then $K$ must be a full simplex $\Delta^{m-1}$. By Lemma \ref{Delta}, there are equalities
$$
\mathrm{coind}_G(\mathbb{R}Z_K) = \mathrm{wgt}_G(\mathbb{R}Z_K) = \infty =\delta(K). 
$$
If the action of $G$ on $\mathbb{R}Z_K$ is free, then $K$ is not a full simplex. By Lemma \ref{diagonal}, there are equalities
$$
\mathrm{coind}_G(\mathbb{R}Z_K) = \mathrm{wgt}_G(\mathbb{R}Z_K) = \mathrm{ind}_G(\mathbb{R}Z_K) =\delta(K). 
$$
Thus the proof of the $\mathrm{rank}\,G=1$ case is completed. 

Next, let $G < (\mathbb{Z}/2)^m$ be a subgroup whose rank is not necessarily one. For each non-trivial element $g \in G$, there are inequalities 
$$
\delta(K) \leq \mathrm{coind}_{G}(\mathbb{R}Z_K) \leq \mathrm{wgt}_{G}(\mathbb{R}Z_K) \leq \mathrm{wgt}_{\langle g \rangle}(\mathbb{R}Z_K) =
\delta(K_{\mathrm{supp}(g)})
$$
by Proposition \ref{subgroup} and Corollary \ref{lower}. If the action of $G$ on $\mathbb{R}Z_K$ is free, then there are an inequalities 
$$
\delta(K_{\mathrm{supp}(g)}) = \mathrm{ind}_{\langle g \rangle}(\mathbb{R}Z_K) \leq \mathrm{ind}_{G}(\mathbb{R}Z_K) \leq \dim \mathbb{R}Z_K = \dim K+1. 
$$
by Proposition \ref{subgroup} and Lemma \ref{property}. The proof is completed. 
\end{proof}


\section{Upper bound of the $G$-index}

In this section, we prove Proposition \ref{retract} by constructing a $(\mathbb{Z}/2)^m$-subcomplex $X$ of $\mathbb{R}Z_K$, which is a $(\mathbb{Z}/2)^m$-equivariant deformation retract. If $\dim X \leq \dim K$, then we have an inequality 
$$
\text{ind}_G(\mathbb{R}Z_K) \leq \text{ind}_G(X) \leq \dim X \leq \dim K
$$ 
by Proposition \ref{property}, implying Proposition \ref{retract}.

Let $I=[0, 1]$ be a unit interval. Since a pair $(I^q, I^{q-1} \times \{0\} \cup \partial I^{q-1} \times I)$ is an NDR pair, we can choose a deformation retraction $\rho \colon I^q \to I^{q-1} \times \{0\} \cup \partial I^{q-1} \times I$ for each $q=1, \dots, m$. Let $\rho_j(x)$ denote the $j$-th entry of $\rho(x)$. For each $j=1, \dots, q$, we define a map $r_j \colon (D^1)^q \to D^1$ as
\begin{equation*}
r_j(x_1, \dots, x_q)=
	\begin{cases}
		\rho_j(|x_1|, \dots, |x_q|) & (x_j \geq 0) \\
		-\rho_j(|x_1|, \dots, |x_q|) & (x_j \leq 0), 
	\end{cases}
\end{equation*}
which is well-defined since $\rho_j(|x_1|, \dots, |x_q|)=|x_j|=0$ when $|x_j|=0$. Let us define a map $r \colon (D^1)^q \to (D^1)^q$ as
$$
r(x):=(r_1(x), \dots, r_q(x)), 
$$
which is $(\mathbb{Z}/2)^m$-equivariant by construction. 

\begin{lemma}
\label{r}
\textit{
The image of the map $r$ is a space
$$
Z:=\left( \bigcup_{j=1}^{q} (D^1)^{j-1} \times \{0\} \times (D^1)^{q-j} \right) \cup \left( \bigcup_{j=1}^{q-1} (D^1)^{j-1} \times S^0 \times (D^1)^{q-j} \right), 
$$
and a map $(D^1)^q \to Z ~ (x \mapsto r(x))$ is a deformation retraction. 
}
\end{lemma}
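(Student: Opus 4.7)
The plan is to split the statement into three checks: (a) that $r((D^1)^q) \subseteq Z$; (b) that $r$ fixes $Z$ pointwise (which, combined with (a), gives image equal to $Z$ and shows $r$ is a retraction); and (c) that the homotopy underlying the deformation retraction $\rho$ can be lifted through the absolute-value construction to a deformation from $\mathrm{id}_{(D^1)^q}$ to $r$ relative to $Z$.

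For (a), I would fix $x\in(D^1)^q$ and use the fact that $\rho(|x_1|,\dots,|x_q|)$ lies in $I^{q-1}\times\{0\}\cup\partial I^{q-1}\times I$. If its $q$-th coordinate is $0$ then $r_q(x)=0$ and $r(x)$ lies in the $j=q$ factor of the first union of $Z$; otherwise some $j$-th coordinate with $j<q$ equals $0$ or $1$, and then $r_j(x)\in\{-1,0,+1\}$ places $r(x)$ in the $j$-th factor of the first or second union. For (b), I would note that whenever $z\in Z$ the tuple $|z|=(|z_1|,\dots,|z_q|)$ already lies in the retract subspace $A:=I^{q-1}\times\{0\}\cup\partial I^{q-1}\times I$: if $z_q=0$ then $|z|$ sits on the bottom face, and if $z_j=0$ or $z_j\in S^0$ for some $j<q$ then $|z_j|\in\{0,1\}$ puts $|z|$ on a side face. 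Hence $\rho(|z|)=|z|$, so the definition of $r_k$ gives $r_k(z)=\varepsilon_k(z)\,|z_k|=z_k$ where $\varepsilon_k(z)$ is the sign of $z_k$.

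For (c), pick an ambient homotopy $H\colon I^q\times I\to I^q$ with $H_0=\mathrm{id}$, $H_1=\rho$, and $H_t|_A=\mathrm{id}_A$, and define $\tilde H\colon (D^1)^q\times I\to (D^1)^q$ by
$$
\tilde H_j(x,t)=
\begin{cases}
H_j(|x_1|,\dots,|x_q|,t) & (x_j\geq 0),\\
-H_j(|x_1|,\dots,|x_q|,t) & (x_j\leq 0).
\end{cases}
$$
The same argument as in (b) shows that whenever $|x_j|=0$ the point $|x|$ lies in $A$, so $H_t$ fixes it and its $j$-th coordinate remains $0$; this makes $\tilde H_j$ well-defined and continuous. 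Then $\tilde H_0=\mathrm{id}$, $\tilde H_1=r$, and $\tilde H_t|_Z=\mathrm{id}_Z$ by reapplying the computation of (b) to $H_t(|z|)=|z|$.

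The main technical obstacle is precisely the well-definedness at the sign changes of $x_j$, and the delicate point is that this works because the chosen NDR pair combines a bottom face $I^{q-1}\times\{0\}$ (needed for $j=q$) with all side faces $\partial I^{q-1}\times I$ (needed for $j<q$); without either piece one of the coordinate functions would fail to vanish where required and the piecewise definition of $r_j$ (and of $\tilde H_j$) would not glue continuously.
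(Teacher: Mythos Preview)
Your proof is correct and follows the same three-step approach as the paper: case analysis on where $\rho(|x|)$ lands to show $r((D^1)^q)\subseteq Z$, the observation $\rho(|z|)=|z|$ for $z\in Z$ to get $r|_Z=\mathrm{id}$, and lifting $H$ to $\tilde H$ via the same signed-absolute-value formula. You are in fact slightly more careful than the paper in making explicit that one needs $H_t|_A=\mathrm{id}_A$ for the well-definedness of $\tilde H_j$ at $x_j=0$ (the paper leaves this implicit in its appeal to the NDR structure).
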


\begin{proof}
Let $x=(x_1, \dots, x_q)$ be any element of $(D^1)^q$. We first note that the image of the map $\rho$ is 
$$
I^{q-1} \times \{0\} \cup \partial I^{q-1} \times I = \left( \bigcup_{j=1}^q I^{j-1} \times \{0\} \times I^{q-j} \right) \cup \left( \bigcup_{j=1}^{q-1} I^{j-1} \times \{1\} \times I^{q-j} \right).  
$$
If $\rho(|x_1|, \dots, |x_q|) \in \bigcup_{j=1}^q I^{j-1} \times \{0\} \times I^{q-j}$, then $r_j(x)=0$ for some $j=1, \dots, q$ since $\rho_j(|x_1|, \dots, |x_q|)=0$ for some $j=1, \dots, q$, implying that 
$$
r(x) \in \bigcup_{j=1}^{q} (D^1)^{j-1} \times \{0\} \times (D^1)^{q-j}. 
$$
If $\rho(|x_1|, \dots, |x_q|) \in \bigcup_{j=1}^{q-1} I^{j-1} \times \{1\} \times I^{q-j}$, then $r_j(x)=1$ for some $j=1, \dots, q-1$ since $\rho_j(|x_1|, \dots, |x_q|)=1$ for some $j=1, \dots, q-1$, implying that 
$$
r(x) \in \bigcup_{j=1}^{q-1} (D^1)^{j-1} \times S^0 \times (D^1)^{q-j}. 
$$ 
Thus it is proved that $r(x) \in Z$. Conversely, if $x=(x_1, \dots, x_q) \in Z$, then $(|x_1|, \dots, |x_q|) \in I^{q-1} \times \{0\} \cup \partial I^{q-1} \times I$, implying that $\rho(|x_1|, \dots, |x_q|)=(|x_1|, \dots, |x_q|)$ since the map $\rho \colon I^q \to I^{q-1} \times \{0\} \cup \partial I^{q-1} \times I$ is a retraction. Thus for each $j=1, \dots, q$, 
\begin{equation*}
r_j(x)=
	\begin{cases}
		\rho_j(|x_1|, \dots, |x_q|)=|x_j|=x_j & (x_j \geq 0) \\
		-\rho_j(|x_1|, \dots, |x_q|)=-|x_j|=x_j & (x_j \leq 0), 
	\end{cases}
\end{equation*}
so that $r(x)=x$. Therefore the image of the map $r$ is $Z$, and the map $r$ is identical on $Z$. Let a map $H \colon I^q \times I \to I^q$ be a homotopy for the deformation retraction $\rho$, and denote the $j$-th entry of $H(x, t)$ by $H_j(x, t)$. Then we define a homotopy $\tilde{H} \colon (D^1)^q \times I \to (D^1)^q$ as 
$$
\tilde{H}(x, t):=(\tilde{H}_1(x, t), \dots, \tilde{H}_q(x, t)), 
$$
where the map $\tilde{H}_j \colon  (D^1)^q \times I \to D^1$ is defined as
\begin{equation*}
\tilde{H}_j(x, t)=
	\begin{cases}
		H_j(|x_1|, \dots, |x_q|, t) & (x_j \geq 0) \\
		-H_j(|x_1|, \dots, |x_q|, t) & (x_j \leq 0) 
	\end{cases}
\end{equation*}
for each $j=1, \dots, q$. By construction, the homotopy $\tilde{H}$ implies that the map $(D^1)^q \to Z ~ (x \mapsto r(x))$ is a deformation retraction. 
\end{proof}

Let us also denote the map $(D^1)^q \to Z$ in Lemma \ref{r} by $r$. 

\begin{lemma}
\label{elementary collapse}
\textit{
Let $K$ be a simplicial complex and $K \searrow_{(\sigma, \tau)} K^{\prime}$  be an elementary collapse. Then there is a $(\mathbb{Z}/2)^m$-equivariant deformation retraction
$$
\tilde{r} \colon \mathbb{R}Z_K \to Y \cup \mathbb{R}Z_{K^{\prime}}
$$
where $Y$ is a $(\mathbb{Z}/2)^m$-subcomplex of $\mathbb{R}Z_K$ with $\dim Y= \dim \sigma$. 
}
\end{lemma}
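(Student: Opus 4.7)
The plan is to apply the retraction $r$ of Lemma~\ref{r} independently on each ``$\sigma$-cube'' of $\mathbb{R}Z_K$ and extend by the identity on the rest. The elementary-collapse hypothesis that $\sigma$ is the only simplex properly containing $\tau$ forces $\tau$ to be a codimension-one face of $\sigma$, so after relabelling the vertices I may assume $\sigma=\{1,\dots,q\}$ and $\tau=\{1,\dots,q-1\}$, where $q=\dim\sigma+1$. For each $\epsilon=(\epsilon_{q+1},\dots,\epsilon_m)\in\{-1,1\}^{m-q}$ set $C_\epsilon:=(D^1)^q\times\{\epsilon\}\subset\mathbb{R}Z_K$. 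The first observation is that $\mathbb{R}Z_K\setminus\mathbb{R}Z_{K'}$ consists of those $x$ with $\{i\mid x_i\in(-1,1)\}\in\{\sigma,\tau\}$; such an $x$ satisfies $x_i\in\{-1,1\}$ for $i\notin\sigma$, and hence lies in some $C_\epsilon$. So the ``bad region'' is covered by the $C_\epsilon$'s.

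Define $\tilde r\colon\mathbb{R}Z_K\to\mathbb{R}Z_K$ by $\tilde r|_{C_\epsilon}=r$ in the first $q$ coordinates and $\tilde r=\mathrm{id}$ outside $\bigcup_\epsilon C_\epsilon$. The critical well-definedness check is that these two prescriptions agree where $C_\epsilon$ meets the rest of $\mathbb{R}Z_K$. The boundary $\partial C_\epsilon$ splits into two types: (a) the faces $x_j=\pm 1$ with $j\in\tau$, which lie in maximal cells coming from $\sigma\setminus\{j\}\in K'$; and (b) the faces $x_q=\pm 1$, which are the $\tau$-cubes at $(\epsilon,\pm 1)$. On type (a) faces the vector $(|x_1|,\dots,|x_q|)$ lies in $\partial I^{q-1}\times I$, where $\rho$ is the identity, so from the defining formula $r$ is the identity there. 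On type (b) faces the collapse hypothesis guarantees that the interior of the $\tau$-cube is contained in no other maximal cell of $\mathbb{R}Z_K$, so no compatibility constraint is imposed and $r$ is free to move these points. Continuity of $\tilde r$ follows.

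By Lemma~\ref{r}, the image of $r$ on $C_\epsilon$ is $Z_\epsilon=Y_\epsilon\cup P_\epsilon$, where
$$
Y_\epsilon:=\bigcup_{j=1}^{q}\bigl\{x\in C_\epsilon\mid x_j=0\bigr\}, \qquad P_\epsilon:=\bigcup_{j=1}^{q-1}\bigl\{x\in C_\epsilon\mid x_j\in S^0\bigr\}.
$$
A point of $P_\epsilon$ satisfies $\{i\mid x_i\in(-1,1)\}\subseteq\sigma\setminus\{j\}\in K'$, so $P_\epsilon\subset\mathbb{R}Z_{K'}$, while $\dim Y_\epsilon=q-1=\dim\sigma$. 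Setting $Y:=\bigcup_\epsilon Y_\epsilon$, this gives $\tilde r(\mathbb{R}Z_K)=Y\cup\mathbb{R}Z_{K'}$, and one checks directly that $\tilde r$ fixes every point of $Y\cup\mathbb{R}Z_{K'}$ (for points of $Y$, the auxiliary vector has a zero coordinate so lies in $I^{q-1}\times\{0\}$, hence is fixed by $\rho$). The $(\mathbb{Z}/2)^m$-action permutes the $C_\epsilon$ via the external coordinates and acts by sign flips on the internal $\sigma$-coordinates; since $r$ is built to be equivariant for these sign flips, so is $\tilde r$. Applying $\tilde H$ from Lemma~\ref{r} on each $C_\epsilon$ and the constant homotopy elsewhere exhibits $\tilde r$ as a deformation retraction. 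A CW structure in which $Y$ is a $(\mathbb{Z}/2)^m$-subcomplex is obtained from the standard cubical structure on $\mathbb{R}Z_K$ by refining along the hyperplanes $x_j=0$ for $j\in\sigma$.

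The main obstacle I expect is the continuity check at $\partial C_\epsilon$: one has to distinguish the two types of boundary faces, explicitly invoking the elementary-collapse hypothesis to dispose of the type (b) faces and the retraction property of $\rho$ to handle the type (a) faces. The remaining ingredients---image calculation, equivariance, the bound on $\dim Y$, and the deformation homotopy---all follow directly from Lemma~\ref{r}.
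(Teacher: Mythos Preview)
Your argument is correct and follows the same route as the paper: reduce to $\sigma=\{1,\dots,q\}$, $\tau=\{1,\dots,q-1\}$, apply $r\times\mathrm{id}$ on $(D^1,S^0)^\sigma=\bigcup_\epsilon C_\epsilon$, extend by the identity on $\mathbb{R}Z_{K'}$, and check compatibility on the overlap using that $r$ fixes $Z$; the paper simply writes the overlap as $\bigcup_{\sigma'<\sigma,\ \sigma'\neq\tau}(D^1,S^0)^{\sigma'}\subset Z\times(S^0)^{m-q}$, which is exactly your type-(a) locus. One small wording slip: when you argue $\tilde r$ fixes $Y$, a point with $x_j=0$ for some $j<q$ has absolute-value vector in $\partial I^{q-1}\times I$ rather than $I^{q-1}\times\{0\}$, but either way it lies in the retract of $\rho$, so the conclusion is unaffected.
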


\begin{proof}
We may assume that $\sigma=\{1, \dots, q\}$ and $\tau=\{1, \dots, q-1\}$ and consider a map $r \times \mathrm{id} \colon (D^1)^q \times (S^0)^{m-q} \to Z \times (S^0)^{m-q}$. The target space $Z \times (S^0)^{m-q}$ equals to 
	\begin{eqnarray*}
&&\left( \bigcup_{j=1}^{q} (D^1)^{j-1} \times \{0\} \times (D^1)^{q-j} \times (S^0)^{m-q} \right) \cup \left( \bigcup_{j=1}^{q-1} (D^1)^{j-1} \times S^0 \times (D^1)^{q-j}  \times (S^0)^{m-q}\right) \\
&=&\left( \bigcup_{j=1}^{q} (D^1)^{j-1} \times \{0\} \times (D^1)^{q-j} \times (S^0)^{m-q} \right) \cup \bigcup_{{\sigma}^{\prime}<\sigma, ~{\sigma}^{\prime} \neq \tau} (D^1, S^0)^{{\sigma}^{\prime}}. 
	\end{eqnarray*}
Note that since $K^{\prime}=K \setminus \{\sigma, \tau\}$, the space $(D^1, S^0)^{{\sigma}^{\prime}}$ is a subspace of $\mathbb{R}Z_{K^{\prime}}$ for any face $\sigma^{\prime}<\sigma$ except for $\tau$. Thus if we set $Y:=\bigcup_{j=1}^{q} (D^1)^{j-1} \times \{0\} \times (D^1)^{q-j} \times (S^0)^{m-q}$, then $Y$ is a $(\mathbb{Z}/2)^m$-complex of $\mathbb{R}Z_K$ with $\dim Y=q-1=\dim \sigma$, and we get a well-defined map
$$
(D^1, S^0)^{\sigma} 
\xrightarrow{r \times \mathrm{id}} Y \cup \bigcup_{{\sigma}^{\prime}<\sigma, ~{\sigma}^{\prime} \neq \tau} (D^1, S^0)^{{\sigma}^{\prime}} 
\xrightarrow{\mathrm{inclusion}} Y \cup \mathbb{R}Z_{K^{\prime}}, 
$$
which is identical on $\mathbb{R}Z_{K^{\prime}} \cap (D^1, S^0)^{\sigma}=\bigcup_{{\sigma}^{\prime}<\sigma, ~{\sigma}^{\prime} \neq \tau} (D^1, S^0)^{{\sigma}^{\prime}} \subset Z$ since
the map $r$ is identical on $Z$. Therefore we get a map 
$$
\tilde{r} \colon \mathbb{R}Z_K=\mathbb{R}Z_{K^{\prime}} \cup (D^1, S^0)^{\sigma} \to Y \cup \mathbb{R}Z_{K^{\prime}}, 
$$
which is a $(\mathbb{Z}/2)^m$-equivariant deformation retraction by construction. Thus the lemma is proved. 
\end{proof}

\begin{proof}[proof of Proposition \ref{retract}]
By assumption, there is a sequence of elementary collapses $K=K_0 \searrow_{(\sigma_1, \tau_1)} K_1 \searrow_{(\sigma_2, \tau_2)} \cdots \searrow_{(\sigma_l, \tau_l)} K_l=L$. For each $i=1, \dots, l$, by Lemma \ref{elementary collapse}, there is a deformation retraction 
$$
\tilde{r}_i \colon \mathbb{R}Z_{K_{i-1}} \to Y_i \cup \mathbb{R}Z_{K_i}
$$
where $Y_i$ is a $(\mathbb{Z}/2)^m$-subcomplex of $\mathbb{R}Z_{K_{i-1}}$. Let us consider a map 
$$
\mathbb{R}Z_{K_{i-1}}
\xrightarrow{\tilde{r}_i} Y_i \cup \mathbb{R}Z_{K_i}
\xrightarrow{\mathrm{inclusion}} \bigcup_{p=1}^{i-1} Y_p \cup Y_i \cup \mathbb{R}Z_{K_i}=\bigcup_{p=1}^i Y_p \cup \mathbb{R}Z_{K_i}. 
$$
We show that this composition is identical on a space $\bigcup_{p=1}^{i-1} Y_p \cap \mathbb{R}Z_{K_{i-1}}$. It suffices to show that the space $\bigcup_{p=1}^{i-1} Y_p \cap \mathbb{R}Z_{K_{i-1}}$ is a subset of a space $Y_i \cup \mathbb{R}Z_{K_i}$ since the map $\tilde{r}_i$ is identical on $Y_i \cup \mathbb{R}Z_{K_i}$. Let $x$ be an arbitrary element of $\bigcup_{p=1}^{i-1} Y_p \cap \mathbb{R}Z_{K_{i-1}}$. Then $x \in Y_p$ for some $p = 1, \dots, i-1$, which means that the $j$-th entry of $x$ is zero for some $j \in \sigma_p$. Thus if $x \in \mathbb{R}Z_{K_{i-1}} \setminus \mathbb{R}Z_{K_i}$, then it is necessary that $x \in (D^1, S^0)^{\sigma_i}$, implying that $x \in Y_i$ since $j \in \sigma_p \cap \sigma_i$. Therefore the above argument shows that $x$ is in $Y_i \cup \mathbb{R}Z_{K_i}$. Thus we get a well-defined map 
$$
\bigcup_{p=1}^{i-1} Y_p \cup \mathbb{R}Z_{K_{i-1}} \to \bigcup_{p=1}^i Y_p \cup \mathbb{R}Z_{K_i}, 
$$
which is a $(\mathbb{Z}/2)^m$-equivariant deformation retraction by construction. Then the composition of maps
$$
\mathbb{R}Z_K \to Y_1 \cup \mathbb{R}Z_{K_1} \to \cdots \to \bigcup_{p=1}^l Y_p \cup \mathbb{R}Z_{K_l}
$$
is a $(\mathbb{Z}/2)^m$-equivariant deformation retraction. Thus if we set $X:=\bigcup_{p=1}^l Y_p \cup \mathbb{R}Z_{K_l}$, then there is an inequality
$$
\dim X = \max_{1 \leq p \leq l} \left\{ \dim \mathbb{R}Z_{K_l}, \dim Y_p \right\}  \leq \dim K
$$
since there are inequalities $ \dim K_l=\dim L<\dim K $ and $ \dim Y_p= \dim \sigma_p \leq \dim K $ for all $p=1, \dots, l$. Therefore we get inequalities
$$
	\text{ind}_G(\mathbb{R}Z_K) \leq \text{ind}_G(X) \leq \dim X \leq \dim K
$$
by Proposition \ref{map}, completing the proof.  
\end{proof}

\begin{example}
	Let $K$ be a cone of the boundary of an $(m-2)$-simplex $\Delta^{m-2}$ and $G=\mathbb{Z}/2$ be a group acting on $\mathbb{R}Z_K$ diagonally. Then $\dim K=m-1$ and $K$ can collapse to a point. However, Theorem \ref{main1} implies that 
$$
\text{ind}_G(\mathbb{R}Z_K)=\delta(K)=m-1=\dim K. 
$$
Thus this example shows that the upper bound in Proposition \ref{retract} is best possible. 
\end{example}


\section{Lower bound of the $G$-coindex and the flag number}

In this section, we discuss a lower bound for the $G$-coindex. The goal is to prove Corollaries \ref{cor1} and \ref{cor2}. 

\begin{lemma}
\label{supp}
\textit
{
Let $g, h$ be non-trivial elements of $G<(\mathbb{Z}/2)^m$, and take elements $i \in \mathrm{supp}(g)$ and $j \in \mathrm{supp}(h)$. If $j \notin \mathrm{supp}(g)$ and $i \notin \mathrm{supp}(h)$, then $i, j \in \mathrm{supp}(gh)$. 
}
\end{lemma}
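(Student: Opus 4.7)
The plan is to unpack the definition of \(\mathrm{supp}\) and compute the \(i\)-th and \(j\)-th coordinates of the product \(gh\) directly. First I would translate the four hypotheses into coordinate conditions using the identification \(\mathbb{Z}/2 = \{1,-1\}\): the hypothesis \(i \in \mathrm{supp}(g)\) says \(g_i = -1\), \(j \in \mathrm{supp}(h)\) says \(h_j = -1\), \(j \notin \mathrm{supp}(g)\) says \(g_j = 1\), and \(i \notin \mathrm{supp}(h)\) says \(h_i = 1\).

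Next I would use the fact that the group law in \((\mathbb{Z}/2)^m\) is coordinate-wise multiplication under this identification, so \((gh)_k = g_k h_k\) for every \(k\). Substituting the values above gives
\[
(gh)_i = g_i h_i = (-1)\cdot 1 = -1, \qquad (gh)_j = g_j h_j = 1 \cdot (-1) = -1,
\]
and therefore \(i, j \in \mathrm{supp}(gh)\) by definition.

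There is essentially no obstacle in this lemma: it is a bookkeeping statement about coordinates of the multiplicative \(2\)-torus, and the whole argument reduces to evaluating two products once the hypotheses have been rewritten as sign conditions. The lemma will be used later to combine support information of distinct non-trivial elements of \(G\), so the content lies not in its proof but in its intended application.
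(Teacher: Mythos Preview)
Your proof is correct and follows exactly the same approach as the paper: both arguments unpack the support hypotheses as sign conditions on the coordinates $g_i,g_j,h_i,h_j$ and then compute $(gh)_i$ and $(gh)_j$ directly via coordinate-wise multiplication in $(\mathbb{Z}/2)^m$. The paper's version is slightly terser, but there is no substantive difference.
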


\begin{proof}
By assumption, the $i$-th and $j$-th entry of $g$ are $-1$ and $1$ respectively, and  the $i$-th and $j$-th entry of $h$ are $1$ and $-1$ respectively. Thus both of the $i$-th and $j$-th entry of $gh$ are $-1$, so that $i, j \in \mathrm{supp}(gh)$. 
\end{proof}

\begin{proof}[proof of Corollary \ref{cor1}]
Suppose that $\delta(K_{\text{supp}(G)})=1$. Then there are two elements $i, j \in \text{supp}(G)$ such that $\{i, j\} \notin K_{\text{supp}(G)}$. If we show that $\{i, j\} \in \text{supp}(g_0)$ for some $g_0 \in G \setminus \{1\}$, then $\{i, j\} \notin K_{\text{supp}(g_0)}$ since $\{i, j\} \notin K_{\text{supp}(G)}$ and $K_{\text{supp}(g_0)} \subset K_{\text{supp}(G)}$, that is, $\delta(K_{\text{supp}(g_0)})=1$. By Theorem \ref{main2}, there are inequalities 
\begin{equation*}
	\begin{split}
1=\delta(K_{\mathrm{supp}(G)}) &\leq \mathrm{coind}_{G}(\mathbb{R}Z_K) \leq \mathrm{wgt}_{G}(\mathbb{R}Z_K) \\
&\leq \min_{g \in G \setminus \{1\}} \delta(K_{\mathrm{supp}(g)}) \leq \delta(K_{\mathrm{supp}(g_0)})=1, 
	\end{split}
\end{equation*}
so that we obtain equalities $\mathrm{coind}_{G}(\mathbb{R}Z_K) = \mathrm{wgt}_{G}(\mathbb{R}Z_K)=1$. Thus it suffices to show that there is some element $g_0 \in G$ such that $i, j \in \text{supp}(g_0)$. Since $\text{supp}(G)=\bigcup_{g \in G}\text{supp}(g)$, we can choose elements $g_i, g_j \in G \setminus \{1\}$ such that $i \in \text{supp}(g_i), j \in \text{supp}(g_j)$.  If $j \in \text{supp}(g_i)$ or $i \in \text{supp}(g_j)$, then $i, j \in \text{supp}(g_i)$ or $i, j \in \text{supp}(g_j)$ respectively. If $j \notin \text{supp}(g_i)$ and $i \notin \text{supp}(g_j)$, then $i, j \in \text{supp}(g_ig_j)$ by Lemma \ref{supp}. Thus the proof is completed. 
\end{proof}

To prove Corollary \ref{cor2},  let us define the combinatorial notion \textit{flag number} $\text{flag}(K)$ of a simplicial complex $K$ as the maximum number of the dimension of minimal non-faces of $K$, that is, 
$$
\mathrm{flag}(K):=\max \{|I|-1 \mid \text{$I$ is a minimal non-face of $K$}\}
$$
If $K$ is a full simplex, then we set $\mathrm{flag}(K)=\infty$. It is easy to see that a simplicial complex $K$ is flag if and only if $K$ is a full simplex or its flag number equals to 1.

\begin{lemma}
\label{full subcomplex}
\textit
{
Let $K$ be a simplicial complex on the vertex set $[m]$ and $I \subset [m]$ be a non-face of $K$. Then there are inequalities 
$$
\delta(K) \leq \delta(K_I) \quad \text{and} \quad \mathrm{flag}(K_I) \leq \mathrm{flag}(K). 
$$
}
\end{lemma}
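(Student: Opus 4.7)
The plan is to argue directly from the definition $K_I = \{\sigma \subset I \mid \sigma \in K\}$, whose main consequence is the following observation: for any subset $J \subset I$, one has $J \in K_I$ if and only if $J \in K$. Both inequalities will reduce to this, with no deeper input.

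First I would verify the $\delta$ inequality. Let $J$ be a non-face of $K_I$; by definition $J \subset I$ and $J \notin K_I$, so by the observation $J \notin K$, i.e.\ $J$ is a non-face of $K$. Hence the collection of non-faces of $K_I$ injects into the collection of non-faces of $K$, and taking the minimum of $|J|-1$ over the smaller set can only be larger, giving $\delta(K) \leq \delta(K_I)$. The hypothesis that $I$ itself is a non-face of $K$ is used only to guarantee that $K_I$ has at least one non-face, so that $\delta(K_I) < \infty$ and the inequality is non-vacuous.

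Next I would handle the flag inequality by showing a stronger statement: every minimal non-face of $K_I$ is a minimal non-face of $K$. Suppose $J$ is a minimal non-face of $K_I$. By the previous paragraph, $J$ is a non-face of $K$. Moreover every proper subset $J' \subsetneq J$ satisfies $J' \subset I$ and $J' \in K_I$, hence $J' \in K$ by the observation. Thus $J$ is a non-face of $K$ all of whose proper subsets are faces of $K$, i.e.\ a minimal non-face of $K$. Taking the maximum of $|J|-1$ over the minimal non-faces of $K_I$, which form a sub-collection of the minimal non-faces of $K$, yields $\mathrm{flag}(K_I) \leq \mathrm{flag}(K)$.

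I do not expect any real obstacle here: both inequalities are essentially tautologies extracted from the definition of the full subcomplex. The only bookkeeping is to treat the extreme cases (when $K_I$ or $K$ happens to be a full simplex, where the invariants are $\infty$) to confirm the inequalities still hold in the convention established in the paper.
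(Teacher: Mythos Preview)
Your proof is correct and follows essentially the same argument as the paper: both rely on the observation that a (minimal) non-face of $K_I$ is a (minimal) non-face of $K$, and then pass to the relevant minimum or maximum. You supply slightly more detail than the paper on why minimality is preserved, but the approach is identical.
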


\begin{proof}
We first note that $K_I$ is not a full simplex by assumption. Let $q:=\delta(K_I)$. Then there is a non-face $J$ of $K_I$ with $|J|=q+1$. By definition of full subcomplex, $J$ is also a non-face of $K$. Thus there is an inequality 
$$
\delta(K) \leq q = \delta(K_I).
$$
Let $q^{\prime}:=\mathrm{flag}(K)$ and take an arbitrary minimal non-face $J^{\prime}$ of $K_I$. Then $J^{\prime}$ is also a minimal non-face of $K$, so that $|J^{\prime}|-1 \leq q^{\prime}$ by definition of flag number. Thus there is an inequality 
$$
\mathrm{flag}(K_I) \leq q^{\prime} = \mathrm{flag}(K). 
$$
The proof is completed. 
\end{proof}

\begin{corollary}
\label{flag}
\textit
{
Let $K$ be a simplicial complex. Then there is an inequality 
$$
\delta(K) \leq \mathrm{flag}(K). 
$$
}
\end{corollary}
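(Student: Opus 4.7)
The plan is to choose a non-face of $K$ that achieves the minimum dimension and observe that it is automatically a minimal non-face, which immediately gives the desired inequality.

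First, I would dispose of the trivial case: if $K$ is a full simplex then by convention $\delta(K) = \mathrm{flag}(K) = \infty$, so the inequality holds. From now on assume $K$ has at least one non-face.

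Next, let $q := \delta(K)$ and pick a non-face $I$ of $K$ with $|I|-1 = q$. The key observation is that any such minimizer is forced to be a minimal non-face. Indeed, for any proper subset $J \subsetneq I$ one has $|J|-1 < |I|-1 = q = \delta(K)$, and by the defining minimality of $\delta(K)$ no subset of dimension strictly less than $q$ can be a non-face. Hence every proper subset of $I$ is a face of $K$, so $I$ is a minimal non-face.

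Finally, since $I$ is a minimal non-face with $|I|-1 = \delta(K)$, the definition of $\mathrm{flag}(K)$ as the maximum dimension among minimal non-faces yields
$$
\delta(K) = |I|-1 \leq \mathrm{flag}(K),
$$
completing the proof. No step here looks like a real obstacle; the only content is noticing that a minimum-dimensional non-face is automatically minimal.
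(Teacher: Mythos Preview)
Your proof is correct, but it takes a different route from the paper. You argue directly: a non-face $I$ realizing the minimum dimension $\delta(K)$ is automatically a \emph{minimal} non-face (since any proper subset has smaller dimension and hence must be a face), and then $\delta(K)=|I|-1\le \mathrm{flag}(K)$ by definition. The paper instead picks an arbitrary minimal non-face $I$, notes that $K_I=\partial\Delta^{|I|-1}$ satisfies $\delta(K_I)=\mathrm{flag}(K_I)$, and then invokes Lemma~\ref{full subcomplex} to chain $\delta(K)\le \delta(K_I)=\mathrm{flag}(K_I)\le \mathrm{flag}(K)$. Your argument is shorter and self-contained, avoiding any appeal to the full-subcomplex lemma; the paper's argument, while less direct, illustrates how the corollary sits inside the $\delta$/$\mathrm{flag}$ monotonicity framework already set up for the proof of Corollary~\ref{cor2}.
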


\begin{proof}
If $K$ is a full simplex, then this corollary is trivial since both $\delta(K)$ and $\mathrm{flag}(K)$ are $\infty$. If $K$ is not a full simplex, then there is a minimal non-face $I$ of $K$. Then $K_I$ is the boundary of a full simplex. By definition, the $\delta$-number and the flag number of the boundary of a full simplex have the same number, so that there are inequalities 
$$
\delta(K) \leq \delta(K_I) = \mathrm{flag}(K_I) \leq \mathrm{flag}(K) 
$$ 
by Lemma \ref{full subcomplex}, completing the proof. 
\end{proof}

\begin{proposition}
\label{same order}
\textit
{
Let $K$ be a simplicial complex. Then the following are equivalent. 
\begin{enumerate}[(i)]
	\item $\delta(K)=\mathrm{flag}(K)$. 
	\item Each minimal non-face of $K$ has the same order.  
\end{enumerate}
}
\end{proposition}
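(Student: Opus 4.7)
The plan is to reduce both conditions to the (possibly empty) set
\[
\mathcal{M} := \{|I|-1 \mid I \text{ is a minimal non-face of } K\}.
\]
The degenerate case $K = \Delta^{m-1}$ is handled separately: by convention $\delta(K) = \mathrm{flag}(K) = \infty$, and $K$ has no minimal non-face, so (ii) is vacuously true and (i) holds. Hence we may assume that $K$ is not a full simplex, so $\mathcal{M}$ is non-empty and $\mathrm{flag}(K) = \max \mathcal{M}$ by definition.

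The main step is to show that $\delta(K) = \min \mathcal{M}$, i.e.\ the minimum dimension among \emph{all} non-faces is already achieved by a \emph{minimal} non-face. The inequality $\delta(K) \leq \min \mathcal{M}$ is immediate since minimal non-faces are non-faces. For the converse, take any non-face $I$ with $|I|-1 = \delta(K)$; every proper subset $J \subsetneq I$ has $|J|-1 < \delta(K)$, hence cannot be a non-face by minimality of $\delta(K)$, hence is a face. Therefore $I$ itself is a minimal non-face and contributes $|I|-1$ to $\mathcal{M}$, giving $\min \mathcal{M} \leq \delta(K)$.

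Once this identification is made, the proposition becomes the tautology $\min \mathcal{M} = \max \mathcal{M}$ if and only if $\mathcal{M}$ is a one-element set, which is exactly the condition that all minimal non-faces have the same cardinality. Both implications (i)$\Rightarrow$(ii) and (ii)$\Rightarrow$(i) follow at once.

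There is no real obstacle here; the only subtlety worth stating cleanly is the observation that a non-face of minimum size is automatically a minimal non-face, which is the content of the middle paragraph above.
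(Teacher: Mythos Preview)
Your proof is correct and rests on the same key observation as the paper's: a non-face of minimum cardinality is automatically a minimal non-face, so $\delta(K)$ is realised within the set $\mathcal{M}$. The paper argues the two implications separately and, for (ii)$\Rightarrow$(i), appeals to the previously established inequality $\delta(K)\le\mathrm{flag}(K)$ (Corollary~\ref{flag}); your packaging via $\delta(K)=\min\mathcal{M}$ and $\mathrm{flag}(K)=\max\mathcal{M}$ is self-contained and slightly more streamlined, but the underlying argument is the same.
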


\begin{proof}
If $K$ is a full simplex, then this proposition is trivial, so we may assume that $K$ is not a full simplex. Suppose that $\delta(K)=\mathrm{flag}(K)=q$ and take an arbitrary minimal non-face $I$ of $K$. Since $\delta(K)=q$, every non-face of $K$ has an order larger than or equals to $q+1$, so that there is an inequality 
$$
|I| \geq q+1. 
$$
Since $\mathrm{flag}(K)=q$, every minimal non-face of $K$ has an order smaller than or equals to $q+1$, so that there is also an equality 
$$
|I| \leq q+1. 
$$
Therefore each minimal non-face of $K$ has the same order $q+1$. Conversely, suppose that each minimal non-face of $K$ has the same order $q+1$. Since there is no non-face of $K$ with an order smaller than $q+1$, there is an inequality
$$
\delta(K) \geq q. 
$$
On the other hand, since every minimal non-face $I$ of $K$ satisfies an inequality $|I|-1 \leq q$, there is also an inequality 
$$
\mathrm{flag}(K) \leq q. 
$$
By Corollary \ref{flag}, we get inequalities 
$$
q \leq \delta(K) \leq \mathrm{flag}(K) \leq q. 
$$
Thus the equality $\delta(K) = \mathrm{flag}(K)$ holds, completing the proof. 
\end{proof}

\begin{proof}[proof of Corollary \ref{cor2}]
Since there is some element $g_0 \in G \setminus \{1\}$ such that $\text{supp}(g_0) \notin K$, $K_{\text{supp}(G)}$ cannot be a full simplex. By assumption, there is an equality 
$$
\delta(K_{\text{supp}(G)}) = \mathrm{flag}(K_{\text{supp}(G)})
$$ 
by Proposition \ref{same order}. Since $K_{\text{supp}(g_0)}$ is not a full simplex, there are inequalities 
$$
\delta(K_{\text{supp}(G)}) \leq \delta(K_{\text{supp}(g_0)}) \leq \mathrm{flag}(K_{\text{supp}(g_0)}) \leq \mathrm{flag}(K_{\text{supp}(G)}) 
$$
by Proposition \ref{flag} and Lemma \ref{full subcomplex}. Thus we get an equality 
$$
\delta(K_{\text{supp}(G)}) = \delta(K_{\text{supp}(g_0)}). 
$$
Finally, let us consider inequalities
$$
\delta(K_{\mathrm{supp}(G)}) \leq \mathrm{coind}_{G}(\mathbb{R}Z_K) \leq \mathrm{wgt}_{G}(\mathbb{R}Z_K) \leq \min_{g \in G \setminus \{1\}} \delta(K_{\mathrm{supp}(g)}) \leq \delta(K_{\mathrm{supp}(g_0)})
$$
in Theorem \ref{main2}. Since the left and right sides are equal, we get the equalities 
$$
\mathrm{coind}_{G}(\mathbb{R}Z_K) = \mathrm{wgt}_{G} = \delta(K_{\mathrm{supp}(G)}), 
$$ 
completing the proof. 
\end{proof}


\section*{Acknowledgement}
The author is deeply grateful to Daisuke Kishimoto for much helpful and valuable advice.



\begin{thebibliography}{99}

\bibitem{buchstaber} V. M. Buchstaber, T. E. Panov, Torus Actions and Their Applications in Topology and Combinatorics, University Lecture Series 24, Amer. Math. Soc. (2002)

\bibitem{cai} L. Cai, On products in real moment-angle manifolds, J. Math. Soc. Japan 69 (2017), 503-528; doi:10.2969/jmsj/06920503

\bibitem{choi}  S. Choi, M. Masuda, and D. Y. Suh, Quasitoric manifolds over a product of simplices, Osaka Journal of Mathematics, 47 (2010), 1-21.

\bibitem{conner}  P. E. Conner, E. E. Floyd, Fixed point free involutions and equivariant maps, Bull. Amer. Math. Soc. 66 (1960) 416-441. 

\bibitem{cornea} O. Cornea, G. Lupton, J. Oprea and D. Tanr\'{e}, Lusternik-Schnirelmann category, Mathematical Surveys and Monographs 103, American Mathematical Society, Providence, RI, 2003.

\bibitem{DJ} M. W. Davis, T. Januszkiewicz, Convex Polytopes, Coxeter Orbifolds and Torus Actions, Duke Math. J. 62 (1991) 417-451

\bibitem{fadell}  E. Fadell and S. Husseini, An ideal-valued cohomological index theory with applications to Borsuk-Ulam and Bourgin-Yang theorems, Ergodic Theory Dynam. Systems, 8 (1988), 73-85. 

\bibitem{franz} M. Franz, The cohomology rings of smooth toric varieties and quotients of moment-angle complexes, Geom. Topol. 25 (2021), 2109-2144. 

\bibitem{fu} X. Fu, Homotopy types of partial quotients for a certain case, preprint (2019), available at arXiv:math/1912.03574. 

\bibitem{gottlieb} D. H. Gottlieb, Applications of bundle map theory, Transactions of the American Mathematical Society, Vol. 171. (1972), pp. 23-50.

\bibitem{grbic} J. Grbi\'{c}, T. Panov, S. Theriault, J. Wu, The homotopy types of moment-angle complexes for flag complexes. Trans. Amer. Math. Soc. 368 (2016), no. 9, 6663-6682.

\bibitem{hasui1} S. Hasui, D. Kishimoto, A. Kizu, The Stiefel-Whitney classes of moment-angle manifolds are trivial, https://arxiv.org/abs/2110.06358

\bibitem{hasui2} S. Hasui, D. Kishimoto, M. Takeda, and M. Tsutaya, Tverberg's theorem for cell complexes, https://arxiv.org/abs/2101.10596.

\bibitem{iriye} K. Iriye and D. Kishimoto, Fat-wedge filtration and decomposition of polyhedral products, Kyoto J. Math. 59 (2019), no. 1, 1-51.

\bibitem{lu} Z. L\"{u} and L. Yu, Topological types of 3-dimensional small covers, Forum Math.23 (2011), no.2, 245-284.

\bibitem{matousek}  J. Matou\v{s}ek. Using the Borsuk-Ulam theorem. Lectures on topological methods in combinatorics and geometry. Written in cooperation with Anders Bj\"{o}rner and G\"{u}nter M. Ziegler. Berlin: Springer, 2008.

\bibitem{rudyak}  Y.B. Rudyak, On category weight and its applications, Topology 38 (1999), no. 1, 37-55

\bibitem{volovikov1} A.Y. Volovikov, On a topological generalization of Tverberg's theorem, Mat. Zametki 59 (1996), no. 3, 454-456, (English transl.) Math. Notes 59 (1996), no. 3-4, 324-325

\bibitem{volovikov2} A.Y. Volovikov, On the van Kampen-Flores theorem, Math. Notes 59 (1996), no. 5, 477-481. 


\end{thebibliography}
 \end{document}